\documentclass[12pt]{article}
\usepackage[T1]{fontenc}
\usepackage{tgtermes}
\usepackage{amsmath, amsthm, amssymb}
\usepackage{mathrsfs}
\usepackage{enumitem}
\usepackage{bm}
\usepackage{dsfont}
\usepackage{tikz}
\usepackage{xcolor}
\usepackage{xypic}
\usepackage{verbatim}

\usepackage[
backend=biber,
style=alphabetic,
sorting=ynt
]{biblatex}
\addbibresource{mybibliography.bib}

\usepackage{hyperref}
\usepackage{nameref}
\usepackage{blindtext}
\usepackage{comment}
\usepackage{titling}
\usepackage{titlesec}

\usepackage{float}
\usepackage{caption}
\usepackage{lipsum}

\usepackage{array}
\usepackage{multirow}

\usepackage{ragged2e}
\sloppy

\usetikzlibrary{quotes,angles}
\usepackage{subcaption}
\usepackage[mathlines]{lineno}

\usepackage{graphicx}
\allowdisplaybreaks

\titleformat{\section}[block]{\color{black}\sc\filcenter}{\thesection}{1em}{}

\titleformat{\subsection}[runin]{\color{black}\normalsize\bfseries}{\sc\thesubsection}{1em}{}

\titleformat{\subsubsection}[runin]{\color{black}\normalsize\bfseries}{\sc\thesubsubsection}{1em}{}



\usepackage[
top    = 1.50cm,
bottom = 1.00cm]{geometry}

\hypersetup{
    colorlinks=true,
    linkcolor=black,
    filecolor=magenta,
    urlcolor=blue,
    citecolor = blue,
}

\setcounter{tocdepth}{2}

\newtheoremstyle{mystyle}
  {}
  {}
  {\itshape}
  {}
  {\bfseries}
  {.}
  { }
  {\thmname{#1}\thmnumber{ #2}\thmnote{ (#3)}}

\theoremstyle{mystyle}

\newtheorem{theorem}{Theorem}[section]

\newtheorem{corollary}[theorem]{Corollary}
\newtheorem{definition}[theorem]{Definition}

\newtheorem{lemma}[theorem]{Lemma}
\newtheorem{proposition}[theorem]{Proposition}
\newtheorem{remark}[theorem]{Remark}
\newtheorem*{assumption*}{Assumption}

\newcommand{\bkt}[1]{\biggl( #1 \biggr) }
\newcommand{\sbkt}[1]{\bigl( #1 \bigr) }
\newcommand{\bigbkt}[1]{\biggl[ #1 \biggr] }
\newcommand{\sbigbkt}[1]{\bigl[ #1 \bigr] }

\newcommand{\bignorm}[1]{\biggl|\biggl| #1 \biggr| \biggr|}
\newcommand{\abs}[1]{\biggl| #1 \biggr| }
\newcommand{\sabs}[1]{\bigl| #1 \bigr| }
\newcommand{\snorm}[1]{\bigl| \bigl|#1 \bigr| \bigr|}
\newcommand{\set}[1]{\biggl \{ #1 \biggr\} }

\numberwithin{equation}{section}

\textwidth 6.7 truein
\textheight 9.8 truein
\oddsidemargin -0.1 truein
\linespread{1}

\title{\textbf{On the space-time fluctuations of the SHE and the KPZ equation in the entire $L^{2}$-regime for $d \geq 3$}}

\author{
  Te-Chun Wang 
  \thanks{Institute of Mathematics, École polytechnique fédérale de Lausanne, Lausanne, Switzerland.}
  \thanks{Email: \href{mailto:te-chun.wang@epfl.ch}{te-chun.wang@epfl.ch}}
}

\begin{document}
\maketitle
\date{} 

\begin{abstract}
We consider the mollified versions of the Kardar-Parisi-Zhang (KPZ) equation and the stochastic heat equation (SHE) in high dimensions $d\geq 3$ and analyze their probability distributions as the mollification is removed. Up to the $L^2$-criticality, we prove Gaussian limits, possibly with random perturbations, for the space-time fluctuations of the mollified versions around their stationarity. This result establishes a continuous analogue of the discrete case obtained in \cite{GFDDP}, and further extends it to a multi-point framework.


$\newline$
{\textsl{Keywords}: Kardar-Parisi-Zhang equation; stochastic heat equation; continuous directed polymer;}
$\newline$
{\textsl{Mathematics Subject Classification}: 60K37, 60H15, 82D60.}
\end{abstract}

\fontdimen2\font=0.29em

\tableofcontents

\section{Introduction} \label{first_section}
The main interest of this article starts with a stochastic partial differential equation introduced by Kardar, Parisi, and Zhang in \cite{KPZ}, known as the Kardar–Parisi–Zhang (KPZ) equation. At formal level, the KPZ equation is given by
\begin{equation} \label{KPZ}
    \frac{\partial \mathbf{H}^{\beta}}{\partial t} (x,t) = \frac{1}{2} \Delta \mathbf{H}^{\beta}(x,t) + \frac{1}{2}|\nabla \mathbf{H}^{\beta}(x,t)|^{2} - \infty + \beta 
    \xi(x,t), \quad  (x,t) \in \mathbb{R}^{d} \times \mathbb{R}_{+},
\end{equation}
where $\beta > 0$ denotes the strength of a space-time white noise $\xi$, which is a
distribution-valued Gaussian field with the correlation function
\begin{align*} 
    \mathbb{E}[\xi(x,t) \xi(y,s)] = \delta(x-y) \delta(t-s).
\end{align*}
From a mathematical viewpoint, the KPZ equation is ill-defined since the white noise is too irregular for the nonlinear term $|\nabla \mathbf{H}^{\beta}(x,t)|^{2}$ to be well-defined. The subtraction of an infinite constant, indicated by “$-\infty$” in the equation, formally accounts for the required renormalization of the divergence arising from the nonlinear term. The KPZ equation describes the dynamics of non-equilibrium growth processes in $d+1$ dimensions and has been studied extensively in both statistical physics and probability theory. For surveys on this topic in one dimension, we refer to \cite{quastel,ferrari2010random,Corwin1,quastel2015one}. 


To interpret the KPZ equation, the stochastic heat equation (SHE) provides a physically relevant solution via the Cole–Hopf transformation \cite{Burgers}; see also \cite{solving,Para} for alternative rigorous formulations. The SHE is formally given by
\begin{equation} \label{SHE}
    \frac{\partial \mathbf{X}^{\beta}}{\partial t} (x,t) = \frac{1}{2} \Delta \mathbf{X}^{\beta}(x,t) + \beta \mathbf{X}^{\beta}(x,t) 
    \xi(x,t), 
    \quad  (x,t) \in \mathbb{R}^{d} \times \mathbb{R}_{+}.
\end{equation}
In one dimension, the SHE can be rigorously defined by using an extension of Itô's theory \cite{walsh1986introduction}.
Notably, the SHE equation is closely related to a model of directed polymers. In high dimensions ($d \geq 3$), this model, known as the continuous directed polymer, can be viewed as a regularized, high-dimensional analogue of the continuum directed random polymer, which was rigorously constructed in one dimension in \cite{Alberts_2013}. See \cite{weakandstrong} for earlier work of the continuous directed polymer and \cite{continuousdirectedpolymer} for a related work. The corresponding discrete polymer model was originally proposed in the physics literature by Huse and Henley \cite{DDP2} as a simplified model for describing the interfaces of the Ising model with random coupling constants. A similar discrete model was later studied in the mathematical literature by  Imbrie and Spencer  \cite{DDP}, and has subsequently received much attention from the mathematical community; see \cite{comets2004probabilistic,ZYGOURAS2024104431} for a review.


This article investigates the SHE and the KPZ equation in \emph{high dimensions $d\geq 3$}. \emph{The assumption about the spacial dimension will be maintained throughout the remainder of the paper}. In this case, it remains unclear whether the foregoing approaches 
from the one-dimensional setting are valid. The challenge arises from the fact that the SHE becomes ill-defined in the sense of an extension of Itô's theory \cite{walsh1986introduction} since the white noise is too irregular for the product $\mathbf{X}^{\beta}(x,t) \cdot \xi(x,t)$.

\subsection{The mollified versions.} \label{model_section}
In high dimensions, a standard method for studying the SHE and the KPZ equation is to apply regularization and renormalization, which provides a rigorous framework to work with. This method analyzes the mollified versions of the SHE and the KPZ equation and investigates their scaling limits. Specifically, we consider the standard approximation scheme of the SHE, referred to as the \emph{mollified SHE}, which is given by
\begin{equation} \label{Mollified_SHE}
    \frac{\partial \mathbf{X}_{\varepsilon}^{\beta}}{\partial t} (x,t) = \frac{1}{2} \Delta \mathbf{X}^{\beta}_{\varepsilon}(x,t) + \beta_{\varepsilon} \mathbf{X}^{\beta}_{\varepsilon}(x,t) 
    \xi_{\varepsilon}
    (x,t).
\end{equation}
For simplicity, we only adopt the constant initial condition $\mathbf{X}^{\beta}_{\varepsilon}(x,0) \equiv 1$ in this paper. The mollified noise in equation (\ref{Mollified_SHE}) is defined as the spatial convolution of $\xi$ with an approximation of the identity:
\begin{equation} \label{definition:spatially_mollified_white_noise}
    \xi_{\varepsilon} (x,t)
    := \phi_{\varepsilon} * \xi (x,t)
    = \int_{\mathbb{R}^{d}} \phi_{\varepsilon}(x-y) \xi(y,t) dy.
\end{equation}
We work with a more general setup $\sbkt{\xi(x,t)}_{(x,t)\in \mathbb{R}^{d}\times \mathbb{R}}$ in which both the positive and negative time parts of the white noise play a role. This setting is crucial for constructing the stationary solutions of the mollified equation (\ref{Mollified_SHE}), as discussed in Section \ref{section_preliminaries}. The approximation of the identity $\phi_{\varepsilon}$ is defined via the scaling $\phi_{\varepsilon}(x) := \varepsilon^{-d} \phi(x/\varepsilon)$. Here and in the sequel, we work with a fixed probability density $\phi\in C_{c}^{\infty}(\mathbb{R}^{d},\mathbb{R}_{+})$ that is smooth, \emph{symmetric-decreasing}, and compactly supported. Here, we say that $f:\mathbb{R}^{d} \mapsto \mathbb{R}_{+}$ is symmetric-decreasing if $f$ is spherically symmetric and, for any unit vector $e \in \mathbb{R}^d$, the function $r \in \mathbb{R}_{+}\mapsto f(r e)$ is decreasing. Furthermore, it is well known that the correct way to tune down the strength of the noise in high dimensions is given as follows, which ensures the mollified SHE (\ref{Mollified_SHE}) having a nontrivial limit as $\varepsilon\to 0$:
\begin{align*}
    \beta_{\varepsilon} := \beta \cdot\varepsilon^{\frac{d-2}{2}},
\end{align*}
where $\beta>0$ is a fixed constant; see \cite{weakandstrong} for further details. In this framework, the existence of a solution to equation (\ref{Mollified_SHE}) follows from an extension of Itô’s theory \cite{walsh1986introduction}. 

Accordingly, the approximation scheme for the KPZ equation is defined by
\begin{equation}  \label{Cole–Hopf_2}
      \mathbf{H}^{\beta}_{\varepsilon}(x,t) := \log\mathbf{X}^{\beta}_{\varepsilon}(x,t).
\end{equation}
It is straightforward to verify that $\mathbf{H}_{\varepsilon}^{\beta}$ solves the following \emph{mollified KPZ equation} by using Itô's formula:
\begin{equation} \label{Mollified_KPZ}
    \frac{\partial \mathbf{H}_{\varepsilon}^{\beta}}{\partial t} (x,t) = \frac{1}{2} \Delta \mathbf{H}^{\beta}_{\varepsilon}(x,t) + \frac{1}{2}|\nabla \mathbf{H}^{\beta}_{\varepsilon}(x,t)|^{2} - \mathbf{C}_{\varepsilon} + \beta_{\varepsilon} 
    \xi_{\varepsilon}(x,t), \quad\mathbf{C}_{\varepsilon} := \frac{1}{2} \beta_{\varepsilon}^{2} ||\phi_{\varepsilon}||^{2}_{2}.
\end{equation}

\subsection{Overview of the main results.} \label{section:overview_main_result_space_time}

The purpose of this article is to analyze the space-time fluctuations of the mollified SHE (\ref{Mollified_SHE}) and the mollified KPZ equation (\ref{Mollified_KPZ}) around their stationary solutions. This setting was previously considered, for example, by Comets, Cosco, and
Mukherjee in \cite{comets2019space,normal}.
The corresponding fluctuations, denoted by $\mathbf{F}^{\textup{SHE};\beta}_{\varepsilon}(x,t)$ and $\mathbf{F}^{\textup{KPZ};\beta}_{\varepsilon}(x,t)$, are precisely defined in (\ref{fluctuation_SHE_KPZ}).  Among these fluctuations, the limiting behavior of $\mathbf{F}^{\textup{SHE};\beta}_{\varepsilon}(x,t)$ is of particular interest. We establish a randomly perturbed Gaussian limit for $\mathbf{F}^{\textup{SHE};\beta}_{\varepsilon}(x,t)$ and the Gaussianity of 
$\mathbf{F}^{\textup{KPZ};\beta}_{\varepsilon}(x,t)$ at multiple space-time points throughout a critical regime, known as the $L^{2}$-regime; see (\ref{beta_L_2_1}) for the corresponding critical point. Here, the multi-point setup refers to the vectors $\sbkt{\mathbf{F}^{\textup{SHE};\beta}_{\varepsilon}(x_{j},t_{j})}_{1\leq j\leq M}$ and $\sbkt{\mathbf{F}^{\textup{KPZ};\beta}_{\varepsilon}(x_{j},t_{j})}_{1\leq j\leq M}$, where $(x_{j},t_{j})_{1\leq j\leq M}$ is a finite collection of points in $\mathbb{R}^{d}\times \mathbb{R}_{+}$. 

\begin{theorem} [Informal version] \label{formal_main_theorem} 
Assume that $d\geq 3$. The following holds for all $\beta$ in the $L^{2}$-regime as $\varepsilon \to 0$:
\begin{enumerate} [label = (\roman*)]
    \item The rescaled fluctuation ${\mathbf{F}^{\textup{SHE};\beta}_{\varepsilon}(x,t)}$ converges in distribution at multiple space-time points to the independent product of two random fields: a Gaussian random field and a random field whose components form a continuous collection of i.i.d. random variables.
    \item The rescaled fluctuation $\mathbf{F}^{\textup{KPZ};\beta}_{\varepsilon}(x,t)$ converges in distribution toward the above Gaussian random field at multiple space-time points.
\end{enumerate}
\end{theorem}

In a multi-point framework, the limiting distribution of $\mathbf{F}^{\textup{SHE};\beta}_{\varepsilon}(x,t)$ can be viewed as a Gaussian random field perturbed by a continuous collection of i.i.d. random variables. To see this limiting behavior, the key observation relies on an asymptotic decomposition given by
\begin{align} \label{formal_key_decomposition}
    \mathbf{F}^{\textup{SHE};\beta}_{\varepsilon}(x,t) \overset{\varepsilon \to 0}{\approx}   \mathbf{N}^{\beta}_{\varepsilon}(x,t) \times \mathbf{X}^{\beta}_{\varepsilon}(x,t) \quad \forall (x,t) \in \mathbb{R}^{d} \times \mathbb{R}_{+},
\end{align}
where $\mathbf{N}^{\beta}_{\varepsilon}(x,t)$ is a random field depending only on the negative-time part of the white noise $\xi$; see (\ref{KPZ_approxiii}) for the precise formulation. The mollified version $\mathbf{X}^{\beta}_{\varepsilon}(x,t)$, constructed from the mollified equation (\ref{Mollified_SHE}), only depends on the positive-time part of the white noise $\xi$. As a result, the decomposition (\ref{formal_key_decomposition}) gives the independence stated in part (i) of Theorem \ref{formal_main_theorem}. In particular, the limiting distributions of $\mathbf{N}^{\beta}_{\varepsilon}(x,t)$ and $\mathbf{X}^{\beta}_{\varepsilon}(x,t)$ correspond to the two random fields given in part (i) of Theorem \ref{formal_main_theorem}. Nevertheless, the i.i.d. random perturbations produced by $\mathbf{X}^{\beta}_{\varepsilon}(x,t)$ arise only in this \emph{local (multi-point) setting}.

More precisely, in the spatial averaging framework, the integration smooths out the contribution from $\mathbf{X}^{\beta}_{\varepsilon}(x,t)$ as $\varepsilon \to 0$:
\begin{align} \label{formal_sp_convergence}
    \int_{\mathbb{R}^{d}} dx f(x) \mathbf{F}^{\textup{SHE};\beta}_{\varepsilon}(x,t) \overset{\varepsilon \to 0}{\approx} \int_{\mathbb{R}^{d}} dx f(x)  \mathbf{N}^{\beta}_{\varepsilon}(x,t) \quad \forall f \in C_{c}^{\infty}(\mathbb{R}^{d}).
\end{align}
Intuitively, within a small region of $\mathbb{R}^{d}$ where the function $f$ can be regarded as a constant, the spikes of $\mathbf{X}^{\beta}_{\varepsilon}(x,t)$ above the expected value $\mathbb{E}[\mathbf{X}^{\beta}_{\varepsilon}(x,t)] = 1$ cancel out those below the expected value as $\varepsilon \to 0$. Hence, in (\ref{formal_sp_convergence}), the random field $\mathbf{X}^{\beta}_{\varepsilon}(x,t)$ behaves like a constant $1$. See Figure \ref{figure:observation_2} for an overview of the behavior of $\mathbf{F}^{\textup{SHE};\beta}_{\varepsilon}(x,t)$ across different setups.


By contrast, regardless of the above two frameworks, the limiting behavior of $\mathbf{F}^{\textup{KPZ};\beta}_{\varepsilon}(x,t)$ is simply that of a Gaussian random field. The difference between the SHE case and the KPZ case suggests a special \emph{cancellation}, which eliminates the random perturbations from $\mathbf{F}^{\textup{SHE};\beta}_{\varepsilon}(x,t)$. This cancellation is detailed in Section \ref{section_heuristics}.
More precisely, $\mathbf{F}^{\textup{KPZ};\beta}_{\varepsilon}(x,t)$ admits the following linear approximation induced by the logarithmic transformation (\ref{Cole–Hopf_2}) and the asymptotic decomposition (\ref{formal_key_decomposition}):
\begin{align} \label{KPZ_approximation}
    \mathbf{F}^{\textup{KPZ};\beta}_{\varepsilon}(x,t) \overset{\varepsilon \to 0}{\approx} \frac{1}{\mathbf{X}^{\beta}_{\varepsilon}(x,t)} \mathbf{F}^{\textup{SHE};\beta}_{\varepsilon}(x,t) \overset{\varepsilon \to 0}{\approx}  \mathbf{N}^{\beta}_{\varepsilon}(x,t) \quad \forall (x,t) \in \mathbb{R}^{d} \times \mathbb{R}_{+}.
\end{align}

Notably, the linear approximation (\ref{KPZ_approximation}) plays a crucial role in circumventing the main challenge encountered when proving the Gaussianity of $\mathbf{F}^{\textup{KPZ};\beta}_{\varepsilon}(x,t)$. More specifically, applying the decomposition (\ref{formal_key_decomposition}) eliminates the problematic denominator in the middle of (\ref{KPZ_approximation}), which might lead to the use of higher moment bounds for the limiting partition function when analyzing the limit of $\mathbf{F}^{\textup{KPZ};\beta}_{\varepsilon}(x,t)$. Such moment bounds may not be valid throughout the $L^{2}$-regime and thereby restrict the derivation of the Gaussianity of $\mathbf{F}^{\textup{KPZ};\beta}_{\varepsilon}(x,t)$. A similar challenge also arose in the work of Comets, Cosco, and Mukherjee \cite{comets2019space}, who were the first to establish part (ii) of Theorem~\ref{formal_main_theorem} under the assumption that the coupling constant $\beta$ lies within a proper subset of the $L^{2}$-regime; see \cite[Section 4.2]{comets2019space} for details. We also note that the above difference between the SHE case and the KPZ case also contrasts the known fact that the spatially averaged fluctuations of the mollified versions (\ref{Mollified_SHE}) and (\ref{Mollified_KPZ}) around their expected values exhibit similar limiting behavior. Specifically, they both converge to the Edwards–Wilkinson equations; see \cite{COSCO2022127,gu2018edwards,guKPZ,scaling} for details.

\begin{figure} [!htb]
\centering
\begin{tikzpicture} [scale=0.7]
\node (a1) at (1,4) {$\mathbf{F}^{\textup{SHE};\beta}_{\varepsilon}(0,t)$};
\node (b1) at (1,1) {$\mathbf{U}^{\beta}(0,t) \cdot \mathcal{Z}_{\infty}^{\beta}(0;\xi)$};
\node (c1) at (1,2.5) [anchor=east]{\small $d$};

\node (d1) at (4,4.5) [anchor=south]{\small multi-point extension};

\node (a2) at (9,4) {$\sbkt{\mathbf{F}^{\textup{SHE};\beta}_{\varepsilon}(x,t)}_{(x,t)\in \mathbb{R}^{d}\times \mathbb{R}_{+}}$};
\node (b2) at (9,1) {$\sbkt{\mathbf{U}^{\beta}(x,t) \cdot \mathbf{Z}^{\beta}(x,t)}_{(x,t) \in \mathbb{R}^{d}\times \mathbb{R}_{+}}$};
\node (c2) at (9,2.5) [anchor=east]{\small $\text{f.d.m.}$};

\node (d1) at (13.5,4.5) [anchor=south]{\small spatial averaging};

\node (a3) at (18,4) {$\displaystyle\int_{\mathbb{R}^{d}} dx f(x) \mathbf{F}^{\textup{SHE};\beta}_{\varepsilon}(x,t)$};
\node (b3) at (18,1) {$\displaystyle \int_{\mathbb{R}^{d}} dx f(x) \mathbf{U}^{\beta}(x,t)$};
\node (c3) at (18,2.5) [anchor=east]{\small $d$};

\draw[->] [thick] (a1) to (b1);
\draw[->] [thick] (a2) to (b2);
\draw[->] [thick] (a3) to (b3);

\draw[->] [red,very thick] (a1) to (a2);
\draw[->] [red,very thick] (b1) to (b2);

\draw[->] [blue,very thick] (a2) to (a3);
\draw[->] [blue,very thick] (b2) to (b3);

\end{tikzpicture}
\caption{ \justifying 
This figure provides a framework for how the limiting behavior of the random field $\sbkt{\mathbf{F}^{\textup{SHE};\beta}_{\varepsilon}(x,t)}_{(x,t)\in \mathbb{R}^{d}\times \mathbb{R}_{+}}$ changes across different settings, where $\mathbf{U}^{\beta}(x,t)$ and $\mathbf{Z}^{\beta}(x,t)$ are the two random fields mentioned in part (i) of Theorem \ref{formal_main_theorem}. Here, $\mathcal{Z}_{\infty}^{\beta}(0;\xi)$ is the limiting partition function, which will be defined in (\ref{limit_CDP}).}
\label{figure:observation_2}
\end{figure}


From our perspective, the convergence results in Theorem~\ref{formal_main_theorem} are especially meaningful in the study of the continuous directed polymer \cite{weakandstrong}, whose partition function $\mathcal{Z}_{T}^{\beta}(x;\xi)$ is defined in (\ref{partition_function_1}). As an application, Theorem~\ref{formal_main_theorem} establishes the \emph{rates of convergence} of the partition function and its associated free energy toward their limits as $T\to\infty$. In high dimensions, a \emph{key feature} of the continuous directed polymer is the non-triviality of the limiting partition function as $T \to \infty$, as well as that of the corresponding limiting free energy; see (\ref{limit_CDP}) for details. This key feature naturally motivates the study of large-scale fluctuations, given in (\ref{definition:PF_fluctuation}) and (\ref{definition:FE_fluctuation}), respectively. Within this framework, we establish the limiting distributions of these large-scale fluctuations in the multi-point framework, given by the vectors
\begin{align} \label{fluctuation:multi-point framework}
\sbkt{\mathbf{F}^{\textup{PF};\beta}_{T}(x_{j})}_{1\leq j\leq M} \quad \text{and} \quad \sbkt{\mathbf{F}^{\textup{FE};\beta}_{T}(x_{j})}_{1\leq j\leq M}.
\end{align}
See Corollary~\ref{corollary_CDP} for details.


The randomly perturbed Gaussian limit for a discrete analogue of $\mathbf{F}^{\textup{PF};\beta}_{T}(0)$ was first proved by Comets and Liu  \cite{rate_Comets}, and later extended to the full discrete $L^2$-regime by Cosco and Nakajima \cite{GFDDP}. In particular, Cosco and Nakajima noted in \cite[p.6]{GFDDP} that their approach, based on a specific version of the martingale central limit theorem \cite[Lemma 3.1]{rate_Comets}, might also be extended to the continuous setting $\mathbf{F}^{\textup{PF};\beta}_{T}(0)$ and $\mathbf{F}^{\textup{FE};\beta}_{T}(0)$. We find this extension plausible in the single-point case. However, in the multi-point framework (\ref{fluctuation:multi-point framework}), it is unclear to us how to apply \cite[Lemma 3.1]{rate_Comets}. More specifically, applying \cite[Lemma 3.1]{rate_Comets} to handle the fluctuations in (\ref{fluctuation:multi-point framework}) would naturally lead one to consider a particular sequence of martingales. \emph{For this sequence of
martingales}, we show that a key condition required by \cite[Lemma 3.1]{rate_Comets} fails to hold; see Section \ref{section:Fluctuations_for_the_continuum_directed_random_polymer} for details. Our approach takes a different path to study the fluctuations in \eqref{fluctuation:multi-point framework}; see Section \ref{section_heuristics} for details.

Building on the convergence of \eqref{fluctuation:multi-point framework} as $T\to\infty$, we further investigate how disorder affects this convergence. Roughly speaking, one of our main results shows that the limiting distribution of $\sbkt{\mathbf{F}^{\textup{FE};\beta}_{T}(x_{j})}_{1\leq j\leq M}$ is independent of the underlying disorder, whereas the limiting law of $\sbkt{\mathbf{F}^{\textup{PF};\beta}_{T}(x_{j})}_{1\leq j\leq M}$ may be affected by it, depending on the choice of starting positions. See Corollary \ref{remark_stability} for the precise formulation. Our main result can be viewed as a continuous and multi-point analogue of the previous works \cite{GFDDP,rate_Comets} for the discrete random polymer. In particular, we go further by explicitly characterizing the limiting distribution affected by disorder; see Remark~\ref{remark:stable_coupling} for further discussions.


\paragraph{Organization.} The remainder of this paper is organized as follows. In Section~\ref{section:fluctuation_main}, we present the main results. The construction of stationary solutions is discussed in Section~\ref{section_preliminaries}. Section~\ref{related} reviews related works, and Section~\ref{section_heuristics} outlines the key ideas and heuristics behind the proofs. The core arguments begin in Section~\ref{section_proofoutline}, where we provide a proof outline for our main result, Theorem~\ref{ST_SHE_and_KPZ}. The proofs of the main techniques are presented in Sections~\ref{SHE_key_proposition_section}, \ref{ratio_bound_secion}, \ref{section:proof_of_trivial_lemma}, and \ref{section_exponential_functional}. Finally, in Section~\ref{section_proof_CDP}, we apply our results to the continuous directed polymer and prove Corollary \ref{corollary_CDP} and Corollary \ref{remark_stability}.

\section{Main results} \label{main_section}
In the sequel, we denote the transition density of $d$-dimensional standard Brownian motion and the positive constant $\gamma(\beta)$ as follows, respectively:
\begin{align} 
    G_{t}(x) := \frac{1}{(2\pi t)^{d/2}} \exp\bkt{-\frac{|x|^{2}}{2t}}, \quad
    \gamma(\beta)^{2} :=
    \beta^{2} \int_{\mathbb{R}^{d}} dx R(x) \mathbf{E}_{\frac{x}{\sqrt{2}}}\bigbkt{\exp\bkt{\beta^{2}\int_{0}^{\infty} ds R(\sqrt{2}B(s))}}. \label{gamma_and_R}
\end{align}
It can be shown that $\gamma(\beta) < \infty$ whenever $\beta \in (0,\beta_{L^{2}})$, where the critical point is defined by
\begin{equation}\label{beta_L_2_1}
    \beta_{L^{2}} := \sup\set{\beta > 0: \sup_{\varepsilon > 0,\; (x,t) \in \mathbb{R}^{d} \times \mathbb{R}_{+}}\mathbb{E}[\mathbf{X}^{\beta}_{\varepsilon}(x,t)^{2}] < \infty}.
\end{equation}
Furthermore, we use $\overset{\textup{f.d.m.}}{\longrightarrow}$ to denote the finite-dimensional convergence. 

\subsection{Multi-point fluctuations.} \label{section:fluctuation_main}
Our main interest lies in the study of the rescaled fluctuations of the mollified SHE (\ref{Mollified_SHE}) and the mollified KPZ equation (\ref{Mollified_KPZ}).  These fluctuations are defined respectively by
\begin{equation} \label{fluctuation_SHE_KPZ}
    \mathbf{F}^{\textup{SHE};\beta}_{\varepsilon}(x,t) := \varepsilon^{-\frac{d-2}{2}} \sbkt{\mathbf{X}_{\varepsilon}^{\textup{st};\beta}(x,t)-\mathbf{X}^{\beta}_{\varepsilon}(x,t)}, \quad 
    \mathbf{F}^{\textup{KPZ};\beta}_{\varepsilon}(x,t) := \varepsilon^{-\frac{d-2}{2}} \sbkt{\mathbf{H}_{\varepsilon}^{\textup{st};\beta}(x,t)-\mathbf{H}^{\beta}_{\varepsilon}(x,t)}.
\end{equation}
Here, $\mathbf{X}_{\varepsilon}^{\textup{st};\beta}(x,t)$ and $\mathbf{H}_{\varepsilon}^{\textup{st};\beta}(x,t)$ are stationary solutions of the mollified equations (\ref{Mollified_SHE}) and (\ref{Mollified_KPZ}), respectively; see Section \ref{section_preliminaries} for details. 

The main result of this article is stated as follows. The underlying ideas of the proof are outlined in Section~\ref{section_heuristics}, and the detailed proof begins in Section~\ref{section_proofoutline}.
\begin{theorem} \label{ST_SHE_and_KPZ}
Assume that $d\geq 3$ and $\beta \in (0,\beta_{L^{2}})$. Then, as $\varepsilon \to 0$, the following holds:
\begin{alignat} {2}
    &\sbkt{\mathbf{F}^{\textup{SHE};\beta}_{\varepsilon}(x,t)}_{(x,t) \in \mathbb{R}^{d} \times \mathbb{R}_{+}} &&\overset{\textup{f.d.m.}}{\longrightarrow} \sbkt{\mathbf{Z}^{\beta}(x,t)\cdot\mathbf{U}^{\beta}(x,t)}_{(x,t) \in \mathbb{R}^{d} \times \mathbb{R}_{+}}, \label{SHE_pointwise_fluctuation} \\
    &\sbkt{\mathbf{F}^{\textup{KPZ};\beta}_{\varepsilon}(x,t)}_{(x,t) \in \mathbb{R}^{d} \times \mathbb{R}_{+}}
    &&\overset{\textup{f.d.m.}}{\longrightarrow}
    \sbkt{\mathbf{U}^{\beta}(x,t)}_{(x,t) \in \mathbb{R}^{d} \times \mathbb{R}_{+}}, \label{KPZ_pointwise_fluctuation} 
\end{alignat}
where $\sbkt{\mathbf{Z}^{\beta}(x,t)}_{(x,t)\in \mathbb{R}^{d} \times \mathbb{R}_{+}}$ and $\sbkt{\mathbf{U}^{\beta}(x,t)}_{(x,t)\in \mathbb{R}^{d} \times \mathbb{R}_{+}}$ are independent random fields. Their probability distributions are described as follows.
\begin{itemize} 
    \item The random field $\sbkt{\mathbf{Z}^{\beta}(x,t)}_{(x,t)\in \mathbb{R}^{d} \times \mathbb{R}_{+}}$ is a continuous collection of i.i.d. random variables such that the probability distribution of a single point is the same as the law of the limiting partition function:  
    \begin{align*}
        \mathbf{Z}^{\beta}(x,t) \overset{d}{=}\mathcal{Z}^{\beta}_{\infty}(0;\xi) \quad \forall (x,t) \in \mathbb{R}^{d} \times \mathbb{R}_{+},
    \end{align*}
    where the random variable $\mathcal{Z}^{\beta}_{\infty}(0;\xi)$ will be explained in (\ref{limit_CDP}).
    \item The random field $\sbkt{\mathbf{U}^{\beta}(x,t)}_{(x,t)\in \mathbb{R}^{d} \times \mathbb{R}_{+}}$ solves the heat equation with a random initial data:
    \begin{equation} \label{GFF}
        \frac{\partial \mathbf{U}^{\beta}}{\partial t} (x,t) = \frac{1}{2}\Delta \mathbf{U}^{\beta}(x,t), \quad \mathbf{U}^{\beta}(x,0) = \mathbf{U}^{\beta}_{0}(x),
    \end{equation}
    where $\sbkt{\mathbf{U}^{\beta}_{0}(x)}_{x\in \mathbb{R}^{d}}$ is a centered Gaussian field with  the singular correlation function
    \begin{equation*}
        \mathbb{E}\sbigbkt{\mathbf{U}^{\beta}_{0}(x_{1})\mathbf{U}^{\beta}_{0}(x_{2})}
        = \gamma(\beta)^{2} \int_{0}^{\infty} G_{2s}(x_{2}-x_{1}) ds. 
    \end{equation*}
\end{itemize}
\end{theorem}

Let us now turn to the continuous directed polymer. We recall that the
partition function at inverse temperature $\beta$ is given by
\begin{equation} \label{partition_function_1}
    \mathcal{Z}^{\beta}_{T}(x;\xi)
    := \mathbf{E}_{x}\bigbkt{\exp\bkt{\beta \int_{0}^{T} ds \xi_{1}(B(s),s) - \frac{\beta^{2} R(0) T }{2}}}, 
\end{equation}
where the mollified white noise $\xi_{1}(\cdot,\cdot)$ is defined in (\ref{definition:spatially_mollified_white_noise}). Here, we set $R(x) := R_{1}(x)$, where $R_{1}$ is given by 
\begin{equation} \label{definition_R_epsilon}
    R_{\varepsilon}(x) := \phi_{\varepsilon} * \phi_{\varepsilon}(x),
\end{equation}
and $\phi_{\varepsilon}$ is the approximation of the identity defined in Section \ref{model_section}. In addition, we denote by $\mathbf{P}_{x}$ the law of $d$-dimensional Brownian motion $B$ starting at $x \in \mathbb{R}^{d}$. The almost sure limit of the partition function $\mathcal{Z}^{\beta}_{T}(x;\xi)$, denoted by $\mathcal{Z}^{\beta}_{\infty}(x;\xi)$ and given in (\ref{definition_Z_x}), naturally gives rise to the study of the large-scale fluctuations
\begin{alignat}{2}
    &\mathbf{F}^{\textup{PF};\beta}_{T}(x) &&:= T^{\frac{d-2}{4}} \sbkt{\mathcal{Z}_{\infty}^{\beta}(\sqrt{T}x;\xi)-\mathcal{Z}^{\beta}_{T}(\sqrt{T}x;\xi)},\label{definition:PF_fluctuation}\\
    &\mathbf{F}^{\textup{FE};\beta}_{T}(x) &&:= T^{\frac{d-2}{4}} \sbkt{\log\mathcal{Z}_{\infty}^{\beta}(\sqrt{T}x;\xi)-\log\mathcal{Z}^{\beta}_{T}(\sqrt{T}x;\xi)}. \label{definition:FE_fluctuation}
\end{alignat}
See \eqref{SHE_D1} and \eqref{KPZ_D0} for their connections to the rescaled fluctuations $\mathbf{F}^{\textup{SHE};\beta}_{\varepsilon}(x,t)$ and $\mathbf{F}^{\textup{KPZ};\beta}_{\varepsilon}(x,t)$. As a byproduct, Theorem \ref{ST_SHE_and_KPZ} establishes the following asymptotics, which will be proved in Section \ref{section_proof_CDP}.
\begin{corollary} \label{corollary_CDP}
Assume that $d\geq 3$ and $\beta \in (0,\beta_{L^{2}})$. Then, as $T \to \infty$, the following holds:
\begin{align} \label{pointwise_fluctuation} 
   \sbkt{\mathbf{F}^{\textup{PF};\beta}_{T}(x)}_{x \in \mathbb{R}^{d}} \overset{\textup{f.d.m.}}{\longrightarrow} \sbkt{\mathbf{Z}^{\beta}(x,1)\cdot\mathbf{U}^{\beta}(x,1)}_{x \in \mathbb{R}^{d}}, \quad\sbkt{\mathbf{F}^{\textup{FE};\beta}_{T}(x)}_{x \in \mathbb{R}^{d}}
    \overset{\textup{f.d.m.}}{\longrightarrow}
    \sbkt{\mathbf{U}(x,1)}_{x \in \mathbb{R}^{d}},
\end{align}
where $\mathbf{Z}^{\beta}(x,1)$ and $\mathbf{U}^{\beta}(x,1)$ are defined in Theorem \ref{ST_SHE_and_KPZ}.
\end{corollary}

Let us now present how the convergences in \eqref{pointwise_fluctuation} are influenced by disorder. To set the stage, we recall the relevant definitions from \cite[Section 1.3]{rate_Comets}, as outlined below. 


\begin{definition} \label{definition:stable}
Let $\Vec{F}_{n}:=(\Vec{F}_{n}(1),\ldots,\Vec{F}_{n}(M))_{n\geq 1}$ be a sequence of random vectors defined on a common probability space $(\widetilde{\Omega},\widetilde{\mathcal{F}},\widetilde{\mathbb{P}})$ such that that $\Vec{F}_{n} \overset{d}{\longrightarrow} \Vec{F}_{\infty}$.
\begin{enumerate} [label=(\roman*)]
    \item The convergence $\Vec{F}_{n} \overset{d}{\longrightarrow} \Vec{F}_{\infty}$ is said to be \textbf{stable} if, in addition to the assumed convergence, for each $A \in \widetilde{\mathcal{F}}$ with positive measure, the conditional law of $\Vec{F}_{n}$ given $A$ is convergent.
    \item  The convergence $\Vec{F}_{n} \overset{d}{\longrightarrow} \Vec{F}_{\infty}$ is said to be \textbf{mixing} if it is stable and 
    the limit is independent of $A$. 
\end{enumerate}
\end{definition}

The influence of disorder can be described as follows, which will be shown in Section \ref{section_proof_CDP}.
\begin{corollary} \label{remark_stability}
Assume that $d\geq 3$ and $\beta < \beta_{L^{2}}$. Let $\{x_{j}:1\leq j\leq M\} \subseteq \mathbb{R}^{d}$ and let $W \in \sigma(\xi)$. We recall that the random fields $\mathbf{Z}^{\beta}$ and $\mathbf{U}^{\beta}$ are defined in Theorem \ref{ST_SHE_and_KPZ}, and we further suppose that both of them are independent of $\sigma(\xi)$. Then the following holds.
\begin{enumerate} [label = (\roman*)]
    \item The convergences of $(\mathbf{F}^{\textup{PF};\beta}_{T}(x_{j}))_{1\leq j\leq M, \; x_{j} \neq 0}$ and $(\mathbf{F}^{\textup{FE};\beta}_{T}(x_{j}))_{1\leq j\leq M}$ are mixing. Equivalently, as $T\to\infty$, we have
    \begin{alignat} {2}
        &\bkt{(\mathbf{F}^{\textup{PF};\beta}_{T}(x_{j}))_{1\leq j\leq M, \; x_{j} \neq 0}, W} &&\overset{d}{\longrightarrow} \bkt{(\mathbf{Z}^{\beta}(x_{j},1)\cdot\mathbf{U}^{\beta}(x_{j},1))_{1\leq j\leq M, \; x_{j} \neq 0}, W},\label{proposition:pf1}\\
        &\bkt{(\mathbf{F}^{\textup{FE};\beta}_{T}(x_{j}))_{1\leq j\leq M}, W} &&\overset{d}{\longrightarrow} \bkt{(\mathbf{U}^{\beta}(x_{j},1))_{1\leq j\leq M}, W}. \label{proposition:fe1}
    \end{alignat}
    \item If we further assume that $x_{1} = 0$ and $x_{j} \neq 0$ for all $2\leq j\leq M$, then the convergence of $(\mathbf{F}^{\textup{PF};\beta}_{T}(x_{j}))_{1\leq j\leq M}$ is stable. In particular, as $T\to\infty$, it holds that
    \begin{align} \label{proposition:pf2}
        \bkt{\mathbf{F}^{\textup{PF};\beta}_{T}(0),(\mathbf{F}^{\textup{PF};\beta}_{T}(x_{j}))_{2\leq j\leq M}, W} \overset{d}{\longrightarrow} \bkt{\mathcal{Z}_{\infty}^{\beta}&(0;\xi)\cdot \mathbf{U}^{\beta}(0,1),\sbkt{\mathbf{Z}^{\beta}(x_{j},1)\cdot\mathbf{U}^{\beta}(x_{j},1)}_{2\leq j\leq M},W}.
    \end{align}
\end{enumerate}
\end{corollary} 

\begin{remark} \label{remark:explanation_pf_fe}
Part (i) of Corollary \ref{remark_stability} may seem intuitive, as the starting point of the polymer moves off to infinity at a rate proportional to $\sqrt{T}$. However, it is important to note that in part (ii) of Corollary \ref{remark_stability}, as $T\to\infty$, $\mathbf{F}^{\textup{PF};\beta}_{T}(0)$ decomposes into two distinct components: one that is influenced by disorder, and another that is independent of disorder. The decomposition of $\mathbf{F}^{\textup{PF};\beta}_{T}(0)$ will be explicit in the heuristic discussions provided in Section \ref{section_heuristics}.
\end{remark}

\begin{remark} \label{remark:stable_coupling}
It is worth noting that the stability of $\Vec{F}_{n} \overset{d}{\to} \Vec{F}_{\infty}$ only guarantees that the pair $(\Vec{F}_{n},A)$ converges in distribution to some coupling of $\Vec{F}_{\infty}$ and $A$, where $A \in \widetilde{\mathcal{F}}$. However, part (ii) of the corollary explicitly identifies the form of the limiting distribution.
\end{remark}


\subsection{Stationary solutions.} \label{section_preliminaries}
In this subsection, we review the approach developed in \cite{normal} for constructing stationary solutions to equations (\ref{Mollified_SHE}) and (\ref{Mollified_KPZ}). To this end, we begin by recalling key properties of the partition function. It was shown in \cite{Feynman-Kac,weakandstrong} that the solution $\mathbf{X}^{\beta}_{\varepsilon}(x,t)$ can be expressed in terms of the partition function of the continuous directed polymer. More precisely, we have
\begin{equation} \label{partition_function_0}
    \mathbf{X}^{\beta}_{\varepsilon}(x,t) 
    = \mathcal{Z}^{\beta}_{\frac{t}{\varepsilon^{2}}}(0;\xi^{(\varepsilon,x,t)}) \quad \text{and} \quad \mathbf{H}^{\beta}_{\varepsilon}(x,t) 
    = \log\mathcal{Z}^{\beta}_{\frac{t}{\varepsilon^{2}}}(0;\xi^{(\varepsilon,x,t)}),
\end{equation}
where $\mathcal{Z}^{\beta}_{T}(x;\xi)$ is defined in (\ref{partition_function_1}), and $\xi^{(\varepsilon,x,t)}$ denotes the diffusively rescaled, time-reversed space-time white noise given by
\begin{equation} \label{rescaled_space_time_noise}
    \xi^{(\varepsilon,x,t)}(y,s) := \varepsilon^{\frac{d+2}{2}}\xi(\varepsilon y + x,t - \varepsilon^{2}s).
\end{equation}
A key result from \cite{weakandstrong} establishes a phase transition for the partition function: there exists a critical inverse temperature $\beta_{c}>0$ such that the family $(\mathcal{Z}^{\beta}_{T}(0;\xi))_{T\geq 0}$ is uniformly integrable if and only if $\beta < \beta_{c}$. In particular, it holds that
\begin{equation} \label{limit_CDP}
    \lim_{T\to\infty}  \mathcal{Z}^{\beta}_{T}(0;\xi) = \mathcal{Z}^{\beta}_{\infty}(0;\xi) \quad\text{if } \beta < \beta_{c}; \quad \lim_{T\to\infty}  \mathcal{Z}^{\beta}_{T}(0;\xi) = 0 \quad\text{if }\beta > \beta_{c},
\end{equation}
where $\mathcal{Z}^{\beta}_{\infty}(0;\xi)$ is a non-degenerate random variable. Notably, $\beta_{L^{2}} < \beta_{c}$; see \cite{moment,junk_new,betaclessthenbetaL2_3} for details.
In particular, the almost sure convergence (\ref{limit_CDP}) implies the existence of a measurable function $\mathfrak{f}_{\beta}$,
defined on the space $\mathscr{S}^{'}(\mathbb{R}^{d} \times \mathbb{R})$ of Schwartz distributions, such that 
\begin{equation} \label{definition_f_beta}
    \mathfrak{f}_{\beta}(\xi) = \mathcal{Z}^{\beta}_{\infty}(0;\xi) \quad \text{(a.s.)}.
\end{equation}
The measurable function $\mathfrak{f}_{\beta}$ allows us to express the limiting partition function of the continuous directed polymer started at $x$ as
\begin{align} \label{definition_Z_x}
    \mathcal{Z}^{\beta}_{\infty}(x;\xi) = \mathfrak{f}_{\beta}(\xi(\cdot+x,\cdot)) \quad \forall x \in \mathbb{R}^{d}.
\end{align}

By using \eqref{definition_f_beta}, the authors of \cite{normal} proved that
$\mathbf{X}_{\varepsilon}^{\textup{st};\beta}(x,t)$ and 
$\mathbf{H}_{\varepsilon}^{\textup{st};\beta}(x,t)$, defined below, serve as $\varepsilon\to 0$ approximations to the mollified versions $\mathbf{X}_{\varepsilon}^{\beta}(x,t)$ and $\mathbf{H}_{\varepsilon}^{\beta}(x,t)$, respectively:
\begin{equation} \label{stationary}
    \mathbf{X}_{\varepsilon}^{\textup{st};\beta}(x,t) := \mathfrak{f}_{\beta}(\xi^{(\varepsilon,x,t)})
    \quad
    \text{and}
    \quad
    \mathbf{H}_{\varepsilon}^{\textup{st};\beta}(x,t) := \log \mathbf{X}_{\varepsilon}^{\textup{st};\beta}(x,t).
\end{equation}
See \cite[Section 2]{normal} for further discussions. Roughly speaking, for a fixed $\varepsilon>0$, (\ref{stationary}) can be viewed as the $t \to \infty$ limit of (\ref{partition_function_0}) while keeping the driving noise $\xi^{(\varepsilon,x,t)}$ fixed. In particular, (\ref{stationary}) are the stationary solutions of equations (\ref{Mollified_SHE}) and (\ref{Mollified_KPZ}) in the following sense:
\begin{lemma} \label{lemma:explanation_stationary_solution}
Assume that $d\geq 3$ and $\beta<\beta_{c}$. Then the following are ture.
\begin{enumerate} [label=(\roman*)]
    \item Both of the laws of $\mathbf{X}_{\varepsilon}^{\textup{st};\beta}(x,t)$ and $\mathbf{H}_{\varepsilon}^{\textup{st};\beta}(x,t)$ are stationary in space-time variables.
    \item For each $\varepsilon>0$, $\mathbf{X}_{\varepsilon}^{\textup{st};\beta}(x,t)$ and $\mathbf{H}_{\varepsilon}^{\textup{st};\beta}(x,t)$ are the solutions of the equations (\ref{Mollified_SHE}) and (\ref{Mollified_KPZ}) with random initial datum $\mathfrak{f}_{\beta}(\xi^{(\varepsilon,x,0)})$ and $\log\mathfrak{f}_{\beta}(\xi^{(\varepsilon,x,0)})$, respectively.
\end{enumerate}
\end{lemma}
\begin{proof}
The first property follows from the fact that $\xi^{(\varepsilon,x,t)}$ is a white noise. The second property can be proved by using the Markov property of Brownian
motion and (\ref{limit_CDP}).
\end{proof}

\subsection{Related literature and additional comments.} \label{related}
In this subsection, we explore the connection between our results and existing works in the literature. For simplicity, we only focus on the high-dimensional case.

\subsubsection{Fluctuations for the continuous directed polymer.} \label{section:Fluctuations_for_the_continuum_directed_random_polymer}
In this part, we concentrate on the large-scale fluctuations $\mathbf{F}^{\textup{PF};\beta}_{T}(x)$ and $\mathbf{F}^{\textup{FE};\beta}_{T}(x)$ \emph{in multi-point framework}. In particular, we continue the discussions from the second to last paragraph of Section \ref{section:overview_main_result_space_time}, where we attempt to apply \cite[Lemma 3.1]{rate_Comets} to a specific sequence of martingales. In the following, we demonstrate that one of the key conditions required by \cite[Lemma 3.1]{rate_Comets} fails to hold.

Suppose that $\beta < \beta_{L^{2}}$ and $M\geq 2$. To apply the martingale central limit theorem \cite[Lemma 3.1]{rate_Comets} to the vector $\sbkt{\mathbf{F}^{\textup{PF};\beta}_{T}(x_{j})}_{1\leq j\leq M}$, one may choose the following sequence of martingales indexed by $T \in \mathbb{N}$:
\begin{alignat}{2} 
    &S^{\Vec{\lambda}}_{T,k} &&:= \sum_{j=1}^{M} \Vec{\lambda}(j) \cdot T^{\frac{d-2}{4}} \sbkt{\mathcal{Z}_{T+k}^{\beta}(\sqrt{T}x_{j};\xi) - \mathcal{Z}_{T}^{\beta}(\sqrt{T}x_{j};\xi)},\quad k\in \mathbb{N}\cup \{0\},\quad \Vec{\lambda} \in \mathbb{R}^{M},\nonumber\\
    &\mathcal{F}_{T,k} &&:= \sigma(\xi(x,t):x\in \mathbb{R}^{d},\; t \leq T+k).\label{definition:sequence_maringales}
\end{alignat}
Then the main challenge in applying \cite[Lemma 3.1]{rate_Comets} lies in verifying the \emph{convergence in probability} of the conditional variance defined by  
\begin{align} \label{definition:conditional_variance}
    (V^{\Vec{\lambda}}_{T,\infty})^{2} := \sum_{k=1}^{\infty} \mathbb{E}[\sbkt{S^{\Vec{\lambda}}_{T,k} - S^{\Vec{\lambda}}_{T,k-1}}^{2}| \mathcal{F}_{T,k}].
\end{align}
According to \cite[Lemma 3.1]{rate_Comets}, it is necessary to show that there exists a non-negative random variable $(V^{\Vec{\lambda}})^{2}$ such that
\begin{align} \label{VT_to_V_in_probability}
    (V^{\Vec{\lambda}}_{T,\infty})^{2} \overset{\mathbb{P}}{\longrightarrow} (V^{\Vec{\lambda}})^{2} \quad \text{as} \quad T \to \infty.  
\end{align}
However, \eqref{VT_to_V_in_probability} fails to hold, which will be proved in Section \ref{section:proof_of_lemma:not_converge_in_probability}:
\begin{lemma} \label{lemma:not_converge_in_probability}
Assume that $d \geq 3$, $\beta < \beta_{L^{2}}$, and $M\geq 2$. Then, for any non-zero $\Vec{\lambda} \in \mathbb{R}^{M}$, $(V^{\Vec{\lambda}}_{T,\infty})^{2}$ does not converge in probability as $T \to \infty$. 
\end{lemma}
Let us now offer a heuristic verification for Lemma \ref{lemma:not_converge_in_probability}. Suppose that all requirements of \cite[Lemma 3.1]{rate_Comets}, especially including (\ref{VT_to_V_in_probability}), could be verified. From (\ref{limit_CDP}), we have
\begin{align*}
    S^{\Vec{\lambda}}_{T,\infty} = \sum_{j=1}^{M} \Vec{\lambda}(j) \cdot \mathbf{F}^{\text{PF};\beta}_{T}(x_{j}) \quad \forall T\in \mathbb{N},
\end{align*}
where $S^{\Vec{\lambda}}_{T,\infty} := \lim_{k\to\infty} S^{\Vec{\lambda}}_{T,k}$. Then, by applying \cite[Lemma 3.1]{rate_Comets}, we have
\begin{align} \label{consequence_of_MCLT}
    S^{\Vec{\lambda}}_{T,\infty} \overset{d}{\longrightarrow} V^{\Vec{\lambda}} \cdot \mathcal{N}(0,1), \quad\text{as} \quad T\to\infty,
\end{align}
where $\mathcal{N}(0,1)$ is the standard Gaussian random variable that is independent of the non-negative random variable $V^{\Vec{\lambda}}$ given in (\ref{VT_to_V_in_probability}). 

To see why \eqref{VT_to_V_in_probability} fails, we begin by examining the properties of $V^{\Vec{\lambda}}$. Notice that the probability distribution of $V^{\Vec{\lambda}}$ follows by using Corollary \ref{corollary_CDP}:
\begin{align} \label{limting_martinagle_ST}
    S^{\Vec{\lambda}}_{T,\infty} &\overset{d}{\longrightarrow} \sum_{j=1}^{M} \Vec{\lambda}(j) \cdot \mathbf{Z}^{\beta}(x_{j},1) \mathbf{U}^{\beta}(x_{j},1) \nonumber\\
    &\overset{d}{=} \bkt{\sum_{1\leq j,j' \leq M} \Vec{\lambda}(j)\Vec{\lambda}(j')\mathbf{Z}^{\beta}(x_{j},1) \mathbf{Z}^{\beta}(x_{j'},1) \gamma^{2}(\beta) \int_{0}^{\infty} G_{2s+2}(x_{j}-x_{j'}) ds} \cdot \mathcal{N}(0,1),
\end{align}
where the first random variable on the right-hand side of \eqref{limting_martinagle_ST}, denoted by $(U^{\Vec{\lambda}})^{2}$, is independent of
$\mathcal{N}(0,1)$. Comparing \eqref{limting_martinagle_ST} with (\ref{consequence_of_MCLT}), we see that $V^{\Vec{\lambda}}$, given in (\ref{VT_to_V_in_probability}), has the same probability distribution as $U^{\Vec{\lambda}}$, which may further suggest that $V^{\Vec{\lambda}}$ should have a similar expression as $U^{\Vec{\lambda}}$. In this context, $V^{\Vec{\lambda}}$ should consist of a term that corresponds to $\mathbf{Z}^{\beta}(x_{j},1) \mathbf{Z}^{\beta}(x_{j'},1)$, which may originate from  $\mathcal{Z}_{\tau_{T}}^{\beta}(\sqrt{T}x_{j};\xi)\mathcal{Z}_{\tau_{T}}^{\beta}(\sqrt{T}x_{j'};\xi)$, due to \eqref{pointwise_convergence_formula_proof_section_goal_1}. Here, $\tau_{T} \to \infty$ as $T \to \infty$. By combining the above observations with (\ref{VT_to_V_in_probability}), we expect 
\begin{align} \label{sum_lambda}
    \sum_{1\leq j,j' \leq M} \Vec{\lambda}(j)\Vec{\lambda}(j') C_{\infty}(j,j') \mathcal{Z}_{\tau_{T}}^{\beta}(\sqrt{T}x_{j};\xi)\mathcal{Z}_{\tau_{T}}^{\beta}(\sqrt{T}x_{j'};\xi) \overset{\mathbb{P}}{\longrightarrow} (V^{\Vec{\lambda}})^{2}
\end{align}
for some positive deterministic coefficients $C_{\infty}(j,j')$.


However, $\mathcal{Z}_{\tau_{T}}^{\beta}(\sqrt{T}x_{j};\xi)\mathcal{Z}_{\tau_{T}}^{\beta}(\sqrt{T}x_{j'};\xi)$ \emph{does not converge in probability whenever either $x_{j}$ or $x_{j'}$ is nonzero}, a situation that necessarily arises in the multi-point framework. This is because the convergence of $\mathcal{Z}_{\tau_{T}}^{\beta}(\sqrt{T}x_{j};\xi)\mathcal{Z}_{\tau_{T}}^{\beta}(\sqrt{T}x_{j'};\xi)$ contradicts to the fact that the limiting partition function $\mathcal{Z}_{\infty}^{\beta}(0;\xi)$ is a non-degenerate random variable, which was proved in \cite[Corollary 2.5]{weakandstrong}. Therefore, it is unlikely that condition (\ref{VT_to_V_in_probability}) can hold.

\subsubsection{Spatially averaged fluctuations.} \label{section:Spatially_averaged_fluctuations}
We now turn to spatial averaging frameworks. The Gaussianity of $\overline{\mathbf{F}}^{\textup{KPZ};\beta}_{\varepsilon}(f,t)$ was first proved by \cite[Theorem 2.3]{comets2019space} in a restricted part of the $L^{2}$-regime. As an application of Theorem~\ref{ST_SHE_and_KPZ}, we extend \cite[Theorem 2.3]{comets2019space} up to the $L^2$-threshold and include the SHE case as well. Specifically, we define
\begin{equation} \label{fluctuations_mean_2}
    \overline{\mathbf{F}}^{\bullet;\beta}_{\varepsilon}(f,t) := \int_{\mathbb{R}^{d}} dx f(x) \mathbf{F}^{\bullet;\beta}_{\varepsilon}(x,t),\quad \bullet = \textup{SHE} \text{ or } \textup{KPZ},
\end{equation}
and show that 
\begin{align} \label{fluctuation:overline}
    \bkt{\overline{\mathbf{F}}^{\bullet;\beta}_{\varepsilon}(f_{j},t_{j})}_{1\leq j\leq M} \overset{d}{\longrightarrow} \bkt{\int_{\mathbb{R}^{d}} dx f_{j}(x) \mathbf{U}^{\beta}(x,t_{j})}_{1\leq j\leq M} \quad \forall\beta<\beta_{L^{2}}.
\end{align}
Here, the test functions $f_{1},\ldots,f_{M} \in C_{c}^{\infty}(\mathbb{R}^{d})$ and $t_{1},\ldots,t_{M} > 0$. We will briefly outline the proof ideas of (\ref{fluctuation:overline}) in Section \ref{section:fluctuation_overline}. 

In addition to the fluctuations in (\ref{fluctuations_mean_2}), one can also consider the following spatially averaged fluctuations:
\begin{align}
    \label{fluctuations_mean}
    &\bkt{\int_{\mathbb{R}^{d}} dx f_{j}(x)\varepsilon^{-\frac{d-2}{2}} \sbkt{\mathbf{J}^{\beta}_{\varepsilon}(x,t_{j})-\mathbb{E}[\mathbf{J}^{\beta}_{\varepsilon}(x,t_{j})]}}_{1\leq j\leq M}, \quad \mathbf{J}^{\beta}_{\varepsilon}(x,t) = \mathbf{X}^{\beta}_{\varepsilon}(x,t) \text{ or } \mathbf{H}^{\beta}_{\varepsilon}(x,t).
\end{align}
These types of fluctuations have been studied in recent works, including \cite{COSCO2022127,gu2018edwards,guKPZ,scaling}. From our perspective, it is important to investigate the limits of \eqref{fluctuations_mean} for all values of $\beta$, as the limiting distributions may vary with increasing noise strength. In particular, the $L^{2}$-regime may represent a critical regime, where the fluctuations in \eqref{fluctuations_mean} undergo a phase transition. Notably, Cosco, Nakajima, and Nakashima \cite{COSCO2022127} proved Gaussianity of the fluctuations in \eqref{fluctuations_mean} throughout the full $L^2$-regime. For related developments, see \cite{nonlinearSHE,nonlinearSHE2,polymer_3d}. In particular, \cite{junk_new_new} analyzes fluctuations of the discrete directed polymer in the weak disorder regime beyond the $L^2$-regime.



\section{Heuristic arguments} \label{section_heuristics}
In this section, we briefly explain the gist and the key ideas to conclude Theorem \ref{ST_SHE_and_KPZ}, with particular emphasis on the how we bypass the main obstacle, namely \emph{the use of any higher moment bounds for the limiting partition function}. Moreover, it will become clear that the rescaled fluctuation $\mathbf{F}^{\textup{\text{SHE}};\beta}_{\varepsilon}(x,t)$ naturally arises in the proof of (\ref{KPZ_pointwise_fluctuation}). 


Let us now concentrate on the proof of (\ref{KPZ_pointwise_fluctuation}).  We begin by using the idea of a linearization applied in \cite[Section 3.6]{GFDDP} to simplify the logarithmic
transformations in $\mathbf{F}^{\text{KPZ};\beta}_{\varepsilon}(x,t)$ as follows:
\begin{align} \label{KPZ_approxi} 
    \mathbf{F}^{\text{KPZ};\beta}_{\varepsilon}(x,t) 
    \overset{\varepsilon \to 0}{\approx} \varepsilon^{-\frac{d-2}{2}} \frac{\mathbf{X}_{\varepsilon}^{\textup{st};\beta}(x,t)-\mathbf{X}^{\beta}_{\varepsilon}(x,t)}{\mathbf{X}^{\beta}_{\varepsilon}(x,t)}.
\end{align}
More specifically, we will prove that, with high probability,
\begin{equation*}
    \frac{\mathbf{X}_{\varepsilon}^{\textup{st};\beta}(x,t)-\mathbf{X}^{\beta}_{\varepsilon}(x,t)}{\mathbf{X}^{\beta}_{\varepsilon}(x,t)}
    \quad \text{is of order $\varepsilon^{\frac{d-2}{2}}$ as } \varepsilon \to 0.
\end{equation*}
Consequently, applying the Taylor expansion of $\log(1+x)$ at $x = 0$ shows that the rescaled fluctuation $\mathbf{F}^{\textup{\text{KPZ}};\beta}_{\varepsilon}(x,t)$ is dominated by the linear term of its Taylor expansion: 
\begin{align*} 
    \mathbf{F}^{\text{KPZ};\beta}_{\varepsilon}(x,t) 
    = \varepsilon^{-\frac{d-2}{2}}\log\bkt{1+\frac{\mathbf{X}_{\varepsilon}^{\textup{st};\beta}(x,t)-\mathbf{X}^{\beta}_{\varepsilon}(x,t)}{\mathbf{X}^{\beta}_{\varepsilon}(x,t)}}
    \overset{\varepsilon \to 0}{\approx} \text{(R.H.S) of (\ref{KPZ_approxi})}.
\end{align*}

Now we further develop a linear approximation to the right-hand side of (\ref{KPZ_approxi}). The idea starts with studying the rescaled fluctuation $\mathbf{F}^{\textup{SHE};\beta}_{\varepsilon}(x,t)$, which appears in the numerator on the right-hand side (\ref{KPZ_approxi}). The \emph{main technique} is based on the following \emph{asymptotic decomposition}:
\begin{align}\label{KPZ_approxii}
    T^{\frac{d-2}{4}}\sbkt{\mathcal{Z}^{\beta}_{\infty}(x\sqrt{T};\xi) - \mathcal{Z}^{\beta}_{Tt}(x\sqrt{T};\xi)} &\overset{T\to\infty}{\approx}
    \mathcal{Z}^{\beta}_{Tt}(x\sqrt{T};\xi) \nonumber\\
    &\times\int_{\mathbb{R}^{d}} dy G_{t}(x - y)
    T^{\frac{d-2}{4}}
    (\mathcal{Z}^{\beta}_{\infty}(y\sqrt{T};\xi(\cdot,\cdot+tT))-1).
\end{align}
To prove (\ref{KPZ_approxii}), it will become clear that that handling the limiting error introduced by the approximation (\ref{KPZ_approxii}) requires establishing a uniform bound for the following expected Brownian-bridge  exponential functional, as well as proving its asymptotic behavior:
\begin{equation} \label{definition:brownian_functional} 
    \mathbf{E}_{0,0}^{T,\sqrt{T}z}\bigbkt{\exp\bkt{\beta^{2}\int_{0}^{T} R(\sqrt{2}B(s))}},
\end{equation}
where the function $R$ is defined in (\ref{definition_R_epsilon}) and $\mathbf{P}_{0,a}^{T,b}$ denotes the law of the $d$-dimensional Brownian bridge starting from $(0,a)$ to $(T,b)$.

Let us now introduce the \emph{key linear approximation} used in our argument:
\begin{equation} \label{KPZ_approxiiii} 
    \varepsilon^{-\frac{d-2}{2}} \frac{\mathbf{X}_{\varepsilon}^{\textup{st};\beta}(x,t)-\mathbf{X}^{\beta}_{\varepsilon}(x,t)}{\mathbf{X}^{\beta}_{\varepsilon}(x,t)}
    \overset{\varepsilon\to 0}{\approx} \int_{\mathbb{R}^{d}} dy G_{t}(x - y)
    \varepsilon^{-\frac{d-2}{2}}
    (\mathcal{Z}^{\beta}_{\infty}(y/\varepsilon;\xi^{(\varepsilon,0,0)})-1).
\end{equation}
The approximation \eqref{KPZ_approxiiii} is crucial, as it removes the most problematic term, the denominator on the left-hand side of (\ref{KPZ_approxiiii}), and thereby allows us to avoid using higher moment bounds for the partition function. The above linear approximation is achieved by the following \emph{cancellation}. Combining the approximation (\ref{KPZ_approxii}) with the Feynman–Kac representations (\ref{partition_function_0}) and (\ref{stationary}) yields 
\begin{align} \label{KPZ_approxiii}
    \varepsilon^{-\frac{d-2}{2}}\sbkt{\mathbf{X}_{\varepsilon}^{\textup{st};\beta}(x,t) - \mathbf{X}^{\beta}_{\varepsilon}(x,t)} 
    \overset{\varepsilon\to 0}{\approx}
    \mathbf{X}^{\beta}_{\varepsilon}(x,t)\times \int_{\mathbb{R}^{d}} dy G_{t}(x - y)
    \varepsilon^{-\frac{d-2}{2}}
    (\mathcal{Z}^{\beta}_{\infty}(y/\varepsilon;\xi^{(\varepsilon,0,0)})-1),
\end{align}
where the noise $\xi^{(\varepsilon,0,0)}$ is defined in (\ref{rescaled_space_time_noise}). Consequently, by combining (\ref{KPZ_approxi}) and (\ref{KPZ_approxiii}), we observe that the factor $\mathbf{X}^{\beta}_{\varepsilon}(x,t)$ on the right-hand side of (\ref{KPZ_approxiii}) cancels with the corresponding term in the denominator of the right-hand side of (\ref{KPZ_approxi}). Therefore, we conclude (\ref{KPZ_approxiiii}).


The cancellation in foregoing step is the main reason why the rescaled fluctuations $\mathbf{F}^{\text{SHE};\beta}_{\varepsilon}(x,t)$ and $\mathbf{F}^{\text{KPZ};\beta}_{\varepsilon}(x,t)$ have different limiting distributions, since the term $\mathbf{X}^{\beta}_{\varepsilon}(x,t)$, which causes the random perturbations, is removed in the KPZ case. In particular, the remaining term given below is the source that produces the Gaussian limit in \eqref{KPZ_pointwise_fluctuation}:
\begin{align} \label{KPZ_approxiiiii}
    \mathbf{F}^{\text{KPZ};\beta}_{\varepsilon}(x,t) \overset{\varepsilon\to 0}{\approx}
    \int_{\mathbb{R}^{d}} dy G_{t}(x - y)
    \varepsilon^{-\frac{d-2}{2}}
    (\mathcal{Z}^{\beta}_{\infty}(y/\varepsilon;\xi^{(\varepsilon,0,0)})-1).
\end{align}
Here, the above approximation follows by combining the approximations (\ref{KPZ_approxi}) and (\ref{KPZ_approxiiii}). Moreover, the Gaussianity of the right-hand side of (\ref{KPZ_approxiiiii}) had been obtained by Cosco, Nakajima, and Nakashima
in \cite{COSCO2022127}. Specifically, by applying the martingale central limit theorem \cite[p. 473]{LT}, they showed that 
\begin{align} \label{first_gaussian}
    \int_{\mathbb{R}^{d}} dy f(y)
    \varepsilon^{-\frac{d-2}{2}}
    (\mathcal{Z}^{\beta}_{\infty}(y/\varepsilon;\xi)-1) \overset{\textup{d}}{\longrightarrow} \int_{\mathbb{R}^{d}} dy f(y)\mathbf{U}_{0}^{\beta}(y) \quad f \in C_{c}^{\infty}(\mathbb{R}^{d}).
\end{align}
Furthermore, the above test functions can be further extended to Schwarz functions. Hence, by taking $f(y) := G_{t}(x-y)$, the right-hand side of (\ref{first_gaussian}) is equal to $\mathbf{U}^{\beta}(x,t)$, which is defined in Theorem \ref{ST_SHE_and_KPZ}. Therefore, we deduce that
$\mathbf{F}^{\textup{\text{KPZ}};\beta}_{\varepsilon}(x,t)$ converges in law to $\mathbf{U}^{\beta}(x,t)$, which implies \eqref{KPZ_pointwise_fluctuation}. 

Finally, let us turn to the rescaled fluctuation $\mathbf{F}^{\textup{\text{SHE}};\beta}_{\varepsilon}(x,t)$. The \emph{key idea} is that the first and second terms on the right-hand side of \eqref{KPZ_approxiii} are independent, since they are driven by disjoint parts of the white noise, namely, $(\xi(x,t))_{(x,t)\in \mathbb{R}^{d} \times \mathbb{R}_{+}}$ and $(\xi(x,t))_{(x,t)\in \mathbb{R}^{d} \times \mathbb{R}_{-}}$, respectively. 
Consequently, we can determine the limiting distribution of the product on the right-hand side of (\ref{KPZ_approxiii}) by analyzing each term separately and then combining the results via independence. Moreover, the approximation scheme $\sbkt{\mathbf{X}^{\beta}_{\varepsilon}(x,t)}_{(x,t) \in \mathbb{R}^{d} \times \mathbb{R}_{+}}$ converges in the sense of finite dimensional convergence towards $\sbkt{\mathbf{Z}^{\beta}(x,t)}_{(x,t)\in \mathbb{R}^{d} \times \mathbb{R}_{+}}$. This follows  (\ref{limit_CDP}) together with the asymptotic independence of  $\mathbf{X}^{\beta}_{\varepsilon}(x_{1},t_{1})$ and $\mathbf{X}^{\beta}_{\varepsilon}(x_{2},t_{2})$ whenever $(x_{1},t_{1}) \neq (x_{2},t_{2})$. This phenomenon anticipated in \cite[Remark 2.3]{weakandstrong} and proved in detail in Section \ref{section_asymptotics of the renormalizations}. Therefore, by combining the convergence of the approximation scheme $\sbkt{\mathbf{X}^{\beta}_{\varepsilon}(x,t)}_{(x,t) \in \mathbb{R}^{d} \times \mathbb{R}_{+}}$ and the Gaussian limit (\ref{first_gaussian}) with test function $f(y) := G_{t}(x-y)$, we obtain the desired result (\ref{SHE_pointwise_fluctuation}).



\section{Outline of the proof}\label{section_proofoutline}
In this section, we outline major steps to conclude Theorem \ref{ST_SHE_and_KPZ}. For clarity, we first establish (\ref{SHE_pointwise_fluctuation}) in Section \ref{section_proof_SHE_pointwise} and then verify (\ref{KPZ_pointwise_fluctuation}) in Section \ref{section_proof_KPZ_pointwise}. We begin by recalling key properties of the partition function, which were proved in \cite[Corollary 2.7]{COSCO2022127} and \cite[Lemma 5.8]{COSCO2022127}, respectively. 



\begin{theorem} [C. Cosco, S. Nakajima, and M. Nakashima \cite{COSCO2022127}]\label{properties_Z}
Assume that $\beta < \beta_{L^{2}}$ and $d \geq 3$. Then the following holds.
\begin{enumerate} [label=(\roman*)]
    \item For any collection of test functions $(f_{j})_{1\leq j\leq M} \subseteq \mathscr{S}(\mathbb{R}^{d})$, as $T\to \infty$,
    \begin{align}\label{corollary_2.7}
    \bkt{\int_{\mathbb{R}^{d}} dx f_{j}(x) T^{\frac{d-2}{4}}(\mathcal{Z}^{\beta}_{\infty}(x\sqrt{T};\xi)-1)}_{1\leq j\leq M} \overset{d}{\longrightarrow}
    \bkt{\int_{\mathbb{R}^{d}} dx f_{j}(x) \mathbf{U}^{\beta}(x,0)}_{1\leq j\leq M},
    \end{align}
    where $\mathcal{Z}^{\beta}_{\infty}(x\sqrt{T};\xi)$ is defined in (\ref{definition_Z_x}) and $\mathbf{U}^{\beta}(x,0)$ is defined in Theorem \ref{ST_SHE_and_KPZ}.
    \item For each positive integer $m$, it holds that
    \begin{equation} \label{negative_moment_bound}
    \mathbb{E}[\sup_{T\geq 0} \mathcal{Z}_{T}^{\beta}(0;\xi)^{-m}] < \infty.
    \end{equation}
\end{enumerate}
\end{theorem}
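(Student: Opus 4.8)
Both parts are recorded in \cite{COSCO2022127} — (i) is Corollary~2.7 and (ii) is Lemma~5.8 there — so in the paper one only needs to cite that reference. Were one to reprove them from scratch, here is the route I would take.

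\emph{Part (ii): a soft martingale reduction, plus one genuinely hard input.} The plan is to trade a uniform-in-time bound for a single bound on the limit. Since $\beta<\beta_{L^{2}}$ lies in the weak disorder regime, the martingale $(\mathscr{Z}_{t})_{t\ge0}$ is closed, i.e.\ $\mathscr{Z}_{t}=\mathbb{E}[\mathscr{Z}_{\infty}\mid\mathcal{F}_{t}]$ with $\mathcal{F}_{t}=\sigma(\xi|_{\mathbb{R}^{d}\times[0,t]})$. As $z\mapsto z^{-m}$ is convex on $(0,\infty)$, conditional Jensen gives $\mathscr{Z}_{t}^{-m}\le\mathbb{E}[\mathscr{Z}_{\infty}^{-m}\mid\mathcal{F}_{t}]=:N_{t}$, and $(N_{t})_{t\ge0}$ is a closed nonnegative martingale, whence $\sup_{t\ge0}\mathscr{Z}_{t}^{-m}\le\sup_{t\ge0}N_{t}$. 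Doob's $L^{p}$ maximal inequality (any $p>1$) then bounds $\mathbb{E}[(\sup_{t}N_{t})^{p}]$ by $(p/(p-1))^{p}\,\mathbb{E}[\mathscr{Z}_{\infty}^{-mp}]$, so $\mathbb{E}[\sup_{t}\mathscr{Z}_{t}^{-m}]<\infty$ as soon as $\mathscr{Z}_{\infty}$ has negative moments of all orders. (It is worth noting that a direct concentration attack on $\log\mathscr{Z}_{t}$ will not do: estimating the Cameron--Martin norm of the Malliavin derivative $D\log\mathscr{Z}_{t}$ by Cauchy--Schwarz against the $\Psi$-weighted endpoint density, using $\int_{\mathbb{R}^{d}}\phi(B_{s}-y)^{2}\,dy=R(0)$, yields a Lipschitz constant of order $\beta\sqrt{R(0)\,t}$, which diverges as $t\to\infty$; that is exactly why one must pass to $\mathscr{Z}_{\infty}$ first.) The remaining input, $\mathbb{E}[\mathscr{Z}_{\infty}^{-q}]<\infty$ for every $q$, I would extract from the self-similar identity $\mathscr{Z}_{\infty}(0;\xi)=\textbf{E}_{0}\big[\Psi_{0,1}(\xi)\,\mathscr{Z}_{\infty}(0;\xi(\cdot+B_{1},\cdot+1))\big]$ coming from the Markov property at time $1$: conditionally on $(B_{s})_{s\le1}$ and $\xi|_{[0,1]}$, the inner factors form a family of copies of $\mathscr{Z}_{\infty}$ built from $\xi|_{[1,\infty)}$ which are close to independent over well-separated endpoints, and iterating the decomposition gives an ``at least one copy is not small'' estimate yielding $\mathbb{P}(\mathscr{Z}_{\infty}<\varepsilon)\le C_{q}\varepsilon^{q}$ for all $q$.

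\emph{Part (i): Clark--Ocone representation, then identify the limiting covariance.} Here I would use the Wiener--Itô representation of the closed martingale. Since the Malliavin derivative is $D_{(y,s)}\mathscr{Z}_{\infty}(a;\xi)=\beta\,\textbf{E}_{a}[\phi(B_{s}-y)\Psi_{0,\infty}(\xi)]$ and the tail factor $\Psi_{s,\infty}$ is of unit mean and $\mathcal{F}_{s}$-independent (along each Brownian path), the predictable projection collapses to
\[
    \mathscr{Z}_{\infty}(a;\xi)-1=\beta\int_{0}^{\infty}\!\!\int_{\mathbb{R}^{d}}\textbf{E}_{a}\big[\phi(B_{s}-y)\,\Psi_{0,s}(\xi)\big]\,\xi(dy,ds).
\]
Inserting $a=x\sqrt{T}$ into $\int f_{j}(x)\,T^{\frac{d-2}{4}}(\mathscr{Z}_{\infty}(x\sqrt{T};\xi)-1)\,dx$ makes each coordinate a single stochastic integral; the Itô isometry, the identity $\int_{\mathbb{R}^{d}}\phi(b-y)\phi(b'-y)\,dy=R(b-b')$, and $\mathbb{E}[\Psi_{0,s}(B)\Psi_{0,s}(B')]=\exp(\beta^{2}\int_{0}^{s}R(B_{r}-B'_{r})\,dr)$ express the limiting covariance of coordinates $i,j$ as
\[
    \lim_{T\to\infty}\beta^{2}T^{\frac{d-2}{2}}\!\iint\! f_{i}(x_{1})f_{j}(x_{2})\int_{0}^{\infty}\!\textbf{E}_{0}\big[R(\Delta+\sqrt{2}\,W_{s})\,e^{\beta^{2}\int_{0}^{s}R(\Delta+\sqrt{2}\,W_{r})\,dr}\big]\,ds\,dx_{1}\,dx_{2},
\]
where $W$ is a standard $d$-dimensional Brownian motion from the origin and $\Delta=(x_{1}-x_{2})\sqrt{T}$. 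Splitting the difference walk into a macroscopic ``approach'' phase (which, after $s\mapsto Ts$, contributes the heat kernel $\int_{0}^{\infty}g_{2s}(x_{2}-x_{1})\,ds$) and a microscopic ``interaction'' phase near the collision (which contributes $\beta^{2}\int_{\mathbb{R}^{d}}R(z)\,\mathfrak{N}(z/\sqrt{2})\,dz=\gamma(\beta)^{2}$, cf.\ (\ref{Cz})) identifies the limit with $\gamma(\beta)^{2}\int_{0}^{\infty}g_{2s}(x_{2}-x_{1})\,ds$. Gaussianity of the limit then follows from a central limit theorem for the continuous, jump-free stochastic integral, whose quadratic variation converges in probability to this deterministic constant; the joint statement over $j=1,\dots,M$ follows by the Cramér--Wold device applied to linear combinations $\sum_{j}\lambda_{j}f_{j}$.

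\emph{Main obstacle.} The covariance computation above is routine; the delicate point in (i) is the CLT step in the \emph{entire} $L^{2}$-regime. A crude fourth-moment criterion for the convergence of the quadratic variation would demand $\mathbb{E}[\mathscr{Z}_{\infty}^{4}]<\infty$, which breaks down as $\beta\uparrow\beta_{L^{2}}$. The way around this is to replace $\mathscr{Z}_{\infty}$ by a finite-horizon truncation through an asymptotic factorization of the type (\ref{KPZ_approxii}) (the precise statement being Proposition~\ref{SHE_key_proposition}): peeling off the macroscopic heat-kernel factor leaves a noise integral whose integrand decorrelates at the relevant scale, so the CLT goes through with only second-moment control. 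In effect, statement (i) and the main theorem share their hardest ingredient. For (ii), the analogous obstacle is the polynomial lower-tail bound $\mathbb{P}(\mathscr{Z}_{\infty}<\varepsilon)\le C_{q}\varepsilon^{q}$ — the only step that genuinely uses the structure of the partition function rather than soft martingale theory.
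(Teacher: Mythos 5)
Your first sentence is exactly what the paper does: Theorem~\ref{properties_Z} is not proved here but imported wholesale, with (i) cited as Corollary~2.7 and (ii) as Lemma~5.8 of \cite{COSCO2022127}. The only thing the paper adds beyond the bare citation is the small remark that Corollary~2.7 in \cite{COSCO2022127} is stated for test functions in $\mathcal{C}_c^\infty(\mathbb{R}^d)$, and the author extends it to Schwartz test functions by a cutoff approximation; your proposal does not touch this step, though it is genuinely needed since the paper's Lemma~\ref{SHE_gaussian_limit} applies (\ref{corollary_2.7}) with the Gaussian heat kernel $g_{t_j}(x_j-\cdot)$, which is Schwartz but not compactly supported.

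Your ``from scratch'' sketch for (ii) is sound: the reduction $\mathscr{Z}_t^{-m}\leq \mathbb{E}[\mathscr{Z}_\infty^{-m}\mid\mathcal{F}_t]$ by conditional Jensen, Doob's $L^p$ maximal inequality, and the lower-tail estimate $\mathbb{P}(\mathscr{Z}_\infty<\varepsilon)\lesssim\varepsilon^q$ via an iterated Markov decomposition is the standard route in the polymer literature, and your warning that direct concentration on $\log\mathscr{Z}_t$ gives a Lipschitz constant growing like $\sqrt{t}$ is a correct and useful observation. The sketch for (i) is plausible in outline — Clark--Ocone plus covariance computation plus a CLT — and you correctly identify that the genuine obstruction in the full $L^2$-regime is the absence of fourth moments, which is why \cite{COSCO2022127} (and Proposition~\ref{SHE_key_proposition} in this paper) use a factorization/truncation device rather than a crude fourth-moment criterion. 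Since none of this is actually proved in the paper, though, you should make clear (as you partly do) that for the purposes of \emph{this} paper only the citation and the cutoff extension argument are required, and the remainder is a reconstruction of the cited reference rather than a replacement for it.
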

\begin{remark}
It is straightforward to extend the test functions in \cite[Corollary 2.7]{COSCO2022127} from compactly supported smooth functions to Schwarz functions
by using smooth cutoff functions that approximate to $1$ on $\mathbb{R}^{d}$.  
\end{remark}

\subsection{Fluctuation for the mollified SHE.} \label{section_proof_SHE_pointwise}
In this subsection, we present the key steps to verify \eqref{SHE_pointwise_fluctuation}. 

\paragraph{Step 1: Large-scale setup.} We begin by deriving a representation of the rescaled fluctuation $\mathbf{F}_{\varepsilon}^{\textup{SHE};\beta}(x,t)$, which provides a sufficient condition for \eqref{SHE_pointwise_fluctuation}. Fix a collection of space-time points 
\begin{align} \label{definition:space_time_points}
    (x_{j},t_{j})_{1\leq j\leq M}\subseteq \mathbb{R}^{d} \times \mathbb{R}_{+}, \quad 0 < t_{1} \leq t_{2} \leq \ldots \leq t_{M},
\end{align}
and set
\begin{align} \label{notation:change_of_variable}
    \delta_{j} := t_{M} - t_{j} \quad \forall 1\leq j\leq M, \quad T := \varepsilon^{-2}.
\end{align}
With these notations, $\mathbf{F}_{\varepsilon}^{\textup{SHE};\beta}(x_{j},t_{j})$ can then be expressed in terms of the partition functions, as shown in (\ref{SHE_D1}). In particular, to prove (\ref{SHE_pointwise_fluctuation}), it suffices to show that
\begin{equation} \label{SHE_D2}
    \text{(R.H.S) of (\ref{SHE_D1})} \overset{d}{\longrightarrow} \sbkt{\mathbf{Z}^{\beta}(x_{j},t_{j})\cdot \mathbf{U}^{\beta}(x_{j},t_{j})}_{1\leq j\leq M}
    \quad\text{as} \quad T\to\infty.
\end{equation}
\begin{lemma}
Suppose that $d\geq 3$, $\beta <\beta_{L^{2}}$, and $\varepsilon>0$. Then, it holds that
\begin{equation} \label{SHE_D0}
    \bkt{\mathbf{X}_{\varepsilon}^{\textup{st};\beta}(x_{j},t_{j}),\mathbf{X}^{\beta}_{\varepsilon}(x_{j},t_{j})}_{1\leq j\leq M}
    \overset{d}{=} \bkt{\mathcal{Z}^{\beta}_{\infty}(x_{j}\sqrt{T};\xi(\cdot,\cdot+\delta_{j} T)),\mathcal{Z}^{\beta}_{t_{j} T}(x_{j}\sqrt{T};\xi(\cdot,\cdot+\delta_{j} T))}_{1\leq j\leq M}.
\end{equation}
In particular, we have
\begin{align} \label{SHE_D1}
    \sbkt{\mathbf{F}_{\varepsilon}^{\textup{SHE};\beta}(x_{j},t_{j})}_{1\leq j\leq M}\overset{d}{=} 
    \bkt{T^{\frac{d-2}{4}} 
    \bkt{
    \mathcal{Z}_{\infty}^{\beta}(x_{j}\sqrt{T};\xi(\cdot,\cdot+\delta_{j} T))
    -
    \mathcal{Z}_{t_{j} T}^{\beta}(x_{j}\sqrt{T};\xi(\cdot,\cdot+\delta_{j} T))
    }}_{1\leq j\leq M}.
\end{align}
\end{lemma}
\begin{proof}
    
We recall the Feynman-Kac representations (\ref{partition_function_0}) and (\ref{stationary}) and note that 
\begin{equation} \label{FK}
    \mathbf{X}^{\beta}_{\varepsilon}(x,t)  
    = \mathcal{Z}^{\beta}_{tT}(\sqrt{T}x;\xi^{(T^{-1/2},0,t)}), \quad
    \mathbf{X}_{\varepsilon}^{\textup{st};\beta}(x,t)  = \mathcal{Z}^{\beta}_{\infty}(\sqrt{T}x;\xi^{(T^{-1/2},0,t)}),
\end{equation}
where $\xi^{(\varepsilon,x,t)}(\cdot,\cdot)$ is defined in (\ref{rescaled_space_time_noise}).
As a result, by combining (\ref{FK}) with the identity 
\begin{equation} \label{change_noise}
    \xi^{(T^{-1/2},0,t_{j})}(y,s) = \xi^{(T^{-1/2},0,t_{M})}(y,s+\delta_{j}T) \quad \forall 1\leq j\leq M,
\end{equation}
we conclude
\begin{align} \label{SHE_D0_1}
    &\bkt{\mathbf{X}_{\varepsilon}^{\textup{st};\beta}(x_{j},t_{j}),\mathbf{X}^{\beta}_{\varepsilon}(x_{j},t_{j})}_{1\leq j\leq M}\nonumber\\
    &= \bkt{\mathcal{Z}^{\beta}_{\infty}(x_{j}\sqrt{T};\xi^{(T^{-1/2},0,t_{M})}(\cdot,\cdot+\delta_{j} T)),\mathcal{Z}^{\beta}_{t_{j} T}(x_{j}\sqrt{T};\xi^{(T^{-1/2},0,t_{M})}(\cdot,\cdot+\delta_{j} T))}_{1\leq j\leq M}.
\end{align}
Therefore, (\ref{SHE_D0}) follows from the fact that $\xi^{(T^{-1/2},0,t_{M})}(\cdot,\cdot)$ is a white noise.
\end{proof}

\paragraph{Step 2: Asymptotic decomposition.}
To prove (\ref{SHE_D2}), we further simplify the right-hand side of (\ref{SHE_D1}). The \emph{key observation} is the $L^{2}$-approximation \eqref{SHE_key_lemma_formula}. For simplicity, we postpone its proof until Section \ref{SHE_key_proposition_section}.

\begin{proposition}\label{SHE_key_proposition}
Assume that $d \geq 3$ and $\beta < \beta_{L^{2}}$. Then, for each $(x,t) \in \mathbb{R}^{d} \times \mathbb{R}_{+}$, as $T \to \infty$, 
\begin{align} \label{SHE_key_lemma_formula}
    T^{\frac{d-2}{4}}\sbkt{&\mathcal{Z}^{\beta}_{\infty}(x\sqrt{T};\xi)
    -
    \mathcal{Z}^{\beta}_{t T}(x\sqrt{T};\xi)}\nonumber\\
    &= \mathcal{Z}^{\beta}_{Tt}(x\sqrt{T};\xi) \times T^{\frac{d-2}{4}}\int_{\mathbb{R}^{d}} dy G_{t}(x - y)
    \sbkt{\mathcal{Z}^{\beta}_{\infty}(y\sqrt{T};\xi(\cdot,\cdot+tT))-1} + o(1) \quad\text{in } L^{2}.
\end{align}
\end{proposition}

\paragraph{Step 3.} 
Now we are ready to prove (\ref{SHE_D2}). Applying Proposition \ref{SHE_key_proposition} implies the $L^{2}-$approximation
\begin{align} \label{SHE_L2}
    &\text{(R.H.S) of (\ref{SHE_D1})} = \nonumber\\
    &
    \bkt{\mathcal{Z}^{\beta}_{t_{j} T}(x_{j}\sqrt{T};\xi(\cdot,\cdot+\delta_{j} T))\times T^{\frac{d-2}{4}}\int_{\mathbb{R}^{d}} dy G_{t_{j}}(x_{j} - y)
    (\mathcal{Z}^{\beta}_{\infty}(y\sqrt{T};\xi(\cdot,\cdot+t_{M}T))-1)
    }_{1\leq j\leq M}
    + o(1).    
\end{align}
The key point for proving (\ref{SHE_D2}) is that the two components on the right-hand side of (\ref{SHE_L2}) are independent. Indeed, by (\ref{partition_function_1}) and (\ref{definition_Z_x}), the first term depends on the noise $\sbkt{\xi(x,t)}_{(x,t)\in\mathbb{R}^{d}\times(0,t_{M}T)}$, while the second term depends on $\sbkt{\xi(x,t)}_{(x,t)\in\mathbb{R}^{d}\times(t_{M}T,\infty)}$. Hence their limits can be analyzed separately. Notice that the convergence of $\sbkt{\mathcal{Z}^{\beta}_{t_{j} T}(x_{j}\sqrt{T};\xi(\cdot,\cdot+\delta_{j} T))}_{1\leq j\leq M}$ follows from (\ref{SHE_D0}) and Lemma \ref{pointwise_convergence}. In addition, by Theorem \ref{properties_Z}, it holds that
\begin{align} \label{SHE_gaussian_limit_0}
    \bkt{T^{\frac{d-2}{4}}\int_{\mathbb{R}^{d}} dy G_{t_{j}}(x_{j} - y)(\mathcal{Z}^{\beta}_{\infty}(y\sqrt{T};\xi(\cdot,\cdot+t_{M}T))-1)}_{1\leq j\leq M}  \overset{d}{\longrightarrow} \sbkt{ G_{t_{j}} * \mathbf{U}_{0}^{\beta}(x_{j})}_{1\leq j\leq M},
\end{align}
where we have used the translation-invariance of the white noise. Therefore, (\ref{SHE_D2}) follows by combining these convergences. 

\subsection{Fluctuation for the mollified KPZ equation.} \label{section_proof_KPZ_pointwise}
We now establish (\ref{KPZ_pointwise_fluctuation}). Before proceeding, we first derive a sufficient condition for \eqref{KPZ_pointwise_fluctuation}. Observe the following large-scale form of the rescaled fluctuation $\mathbf{F}_{\varepsilon}^{\textup{KPZ};\beta}(x,t)$: 
\begin{align} \label{KPZ_D0}
    \sbkt{\mathbf{F}_{\varepsilon}^{\textup{KPZ};\beta}&(x_{j},t_{j})}_{1\leq j\leq M}\overset{d}{=} \nonumber\\
    &\bkt{T^{\frac{d-2}{4}} 
    \bkt{
    \log\mathcal{Z}^{\beta}_{\infty}(x_{j}\sqrt{T};\xi(\cdot,\cdot+\delta_{j} T))
    -
    \log\mathcal{Z}^{\beta}_{t_{j} T}(x_{j}\sqrt{T};\xi(\cdot,\cdot+\delta_{j} T))
    }}_{1\leq j\leq M},
\end{align}
where we have used (\ref{SHE_D0}). The collection of space-time points $(x_{j},t_{j})_{1\leq j\leq M}$ is defined in (\ref{definition:space_time_points}), and 
the parameter 
$\delta_{j}$ is defined in (\ref{notation:change_of_variable}). Hence, to prove (\ref{KPZ_pointwise_fluctuation}), it is enough to show that
\begin{equation} \label{KPZ_D00}
    \text{(R.H.S) of (\ref{KPZ_D0})} \overset{d}{\longrightarrow} \sbkt{\mathbf{U}^{\beta}(x_{j},t_{j})}_{1\leq j\leq M}
    \quad\text{as } T\to\infty.
\end{equation}

\paragraph{Step 1: Linearizing the logarithm.} We begin by simplifying the right-hand side of (\ref{KPZ_D0}).
\begin{proposition} \label{proposition_KPZ_D1}
Assume that $d\geq 3$ and $\beta<\beta_{L^{2}}$. Then, we have the following $L^{1}$-approximation:
\begin{align} \label{KPZ_D1}
    &T^{\frac{d-2}{4}} 
    \bkt{
    \log\mathcal{Z}^{\beta}_{\infty}(x_{j}\sqrt{T};\xi(\cdot,\cdot+\delta_{j} T))
    -
    \log\mathcal{Z}^{\beta}_{t_{j} T}(x_{j}\sqrt{T};\xi(\cdot,\cdot+\delta_{j} T))
    }\nonumber\\
    &\quad\quad\quad\quad= T^{\frac{d-2}{4}} \times
    \frac{\mathcal{Z}^{\beta}_{\infty}(x_{j}\sqrt{T};\xi(\cdot,\cdot+\delta_{j} T))
    -
    \mathcal{Z}^{\beta}_{t_{j} T}(x_{j}\sqrt{T};\xi(\cdot,\cdot+\delta_{j} T))}{\mathcal{Z}^{\beta}_{t_{j} T}(x_{j}\sqrt{T};\xi(\cdot,\cdot+\delta_{j} T))}+o(1).
\end{align}
\end{proposition}
Proposition \ref{proposition_KPZ_D1} follows from the moment bounds stated below, which will be verified in Section \ref{ratio_bound_secion}.

\begin{proposition}\label{ratio_bound}
For each $d \geq 3$ and every $\beta < \beta_{L^{2}}$, there exists $\delta > 0$ such that 
\begin{equation} \label{I_0}
    \sup_{T \geq 1}\mathbf{M}^{(\nu)}_{T;\beta,2+\delta} \leq C \quad \forall \nu = 1,2,
\end{equation}
where $C = C(\phi,\beta,d,\delta)$ is a positive constant and $\mathbf{M}^{(\nu)}_{T;\beta,p}$ is given by
\begin{equation} \label{I}
    \mathbf{M}^{(1)}_{T;\beta,p} := \bigl | \bigl |
    T^{\frac{d-2}{4}}\sbkt{\log \mathcal{Z}^{\beta}_{\infty}(0;\xi) - \log \mathcal{Z}^{\beta}_{T}(0;\xi)}
    \bigr | \bigr |_{L^{p}},
    \quad
    \mathbf{M}^{(2)}_{T;\beta,p} := \biggl | \biggl |
    T^{\frac{d-2}{4}} 
    \frac{
    \mathcal{Z}^{\beta}_{\infty}
    (0;\xi)-
    \mathcal{Z}^{\beta}_{T}(0;\xi)
    }{\mathcal{Z}^{\beta}_{T}(0;\xi)} 
    \biggr | \biggr |_{L^{p}}.
\end{equation}
\end{proposition}

\begin{remark}
The proof of Proposition~\ref{proposition_KPZ_D1} only requires the estimate \eqref{I_0} with $\delta = 0$. However, to establish \eqref{I_0} with $\nu = 1$ (even in the case $\delta = 0$), one needs the estimate \eqref{I_0} with $\nu = 2$ for some $\delta > 0$. This is why we prove \eqref{I_0} for some  $\delta > 0$.
\end{remark}


\begin{proof} [Proof of Proposition \ref{proposition_KPZ_D1}.]
To control the $L^{1}$-error induced by (\ref{KPZ_D1}), we observe that $|\log(1+x) - x| \leq \overline{C}x^{2}$ for every $|x| \leq \frac{1}{2}$, which follows from Taylor's theorem. As a result, the error can be bounded in the following way:
\begin{align} \label{key_lemma_error}
    &\text{The $L^{1}$-error induced by (\ref{KPZ_D1})}\leq \frac{\overline{C}}{
    T^{\frac{d-2}{4}}}
    \biggl | \biggl |
    T^{\frac{d-2}{4}} 
    \frac{
    \mathcal{Z}_{\infty}^{\beta}(0;\xi)
    -
    \mathcal{Z}_{tT}^{\beta}(0;\xi)
    }{\mathcal{Z}_{tT}^{\beta}(0;\xi)}
    \biggr | \biggr |_{L^{2}}^{2}+\nonumber\\
    &\mathbb{E}\bigbkt{\abs{T^{\frac{d-2}{4}}\sbkt{
    \log\mathcal{Z}_{\infty}^{\beta}(0;\xi)
    - 
    \log\mathcal{Z}_{tT}^{\beta}(0;\xi)
    }-T^{\frac{d-2}{4}} 
    \frac{
    \mathcal{Z}_{\infty}^{\beta}(0;\xi)
    -
    \mathcal{Z}_{tT}^{\beta}(0;\xi)
    }{\mathcal{Z}_{tT}^{\beta}(0;\xi)}};\abs{ 
    \frac{
    \mathcal{Z}_{\infty}^{\beta}(0;\xi)
    -
    \mathcal{Z}_{tT}^{\beta}(0;\xi)
    }{\mathcal{Z}_{tT}^{\beta}(0;\xi)}}\geq \frac{1}{2}}.
\end{align}
In addition, by Hölder's inequality and Markov's inequality, the second term on the right-hand side of (\ref{key_lemma_error}) can be further bounded by
\begin{align} \label{def_A_2_T}
    \biggl | \biggl |
    T^{\frac{d-2}{2}}
    \sbkt{
    \log\mathcal{Z}^{\beta}_{\infty}(0;\xi)- 
    \log\mathcal{Z}^{\beta}_{tT}(0;\xi)
    }- 
    T^{\frac{d-2}{2}}
    &\frac{
    \mathcal{Z}^{\beta}_{\infty}(0;\xi)
    -
    \mathcal{Z}^{\beta}_{tT}(0;\xi)
    }{\mathcal{Z}^{\beta}_{tT}(0;\xi)}
    \biggr | \biggr |_{L^{2}}\nonumber\\
    &\times \frac{2}{
    T^{\frac{d-2}{4}}}
    \biggl | \biggl |
    T^{\frac{d-2}{4}} 
    \frac{
    \mathcal{Z}^{\beta}_{\infty}(0;\xi)
    -
    \mathcal{Z}_{tT}(0;\xi)
    }{\mathcal{Z}_{tT}(0;\xi)}
    \biggr | \biggr |_{L^{2}}.
\end{align}
THerefore, by Proposition \ref{ratio_bound} and \eqref{def_A_2_T}, both terms on the right-hand side of (\ref{key_lemma_error}) vanish as $T\to\infty$. 
\end{proof}

\paragraph{Step 2: Linearizing the ratio.} In this step, we further simplify the right-hand side of (\ref{KPZ_D1}).
\begin{proposition} \label{proposition_KPZ_D2}
Assume that $d\geq 3$ and $\beta<\beta_{L^{2}}$. Then, we have the following $L^{1}$-approximation:
\begin{align} \label{KPZ_D2}
    &T^{\frac{d-2}{4}} 
    \frac{\mathcal{Z}^{\beta}_{\infty}(x_{j}\sqrt{T};\xi(\cdot,\cdot+\delta_{j} T))
    -
    \mathcal{Z}^{\beta}_{t_{j} T}(x_{j}\sqrt{T};\xi(\cdot,\cdot+\delta_{j} T))}{\mathcal{Z}^{\beta}_{t_{j} T}(x_{j}\sqrt{T};\xi(\cdot,\cdot+\delta_{j} T))}\nonumber\\
    &\quad\quad\quad\quad\quad\quad\quad\quad\quad= T^{\frac{d-2}{4}}\int_{\mathbb{R}^{d}} dy G_{t_{j}}(x_{j} - y)
    (\mathcal{Z}^{\beta}_{\infty}(y\sqrt{T};\xi(\cdot,\cdot+t_{M}T))-1)+o(1).   
\end{align}
\end{proposition}
\begin{proof}
The $L^{1}$-error induced by (\ref{KPZ_D2}) vanishes by combining Proposition \ref{SHE_key_proposition}, Cauchy-Schwarz inequality, and the negative moment bound in Theorem \ref{properties_Z}.
\end{proof}
\paragraph{Step 3.} Combining Proposition \ref{proposition_KPZ_D1} and Proposition \ref{proposition_KPZ_D2}, the desired convergence \eqref{KPZ_D00} follows from \eqref{SHE_gaussian_limit_0}.

\section{Asymptotic decomposition of $\mathbf{F}^{\textup{PF};\beta}_{T}(x)$} \label{SHE_key_proposition_section}
The purpose of this section is to establish Proposition \ref{SHE_key_proposition}. By the translation-invariance of the white noise, we can assume that $x = 0$. In addition, by a change of variable, it is enough to consider the case of $t = 1$. As a result, to prove Proposition \ref{SHE_key_proposition}, it suffices to show that
\begin{equation} \label{goal_key_lemma_proof}
    \mathbb{E}\bigbkt{\bkt{\mathbf{F}^{\textup{PF};\beta}_{T}(0) - 
    \mathcal{Z}^{\beta}_{T}(0;\xi) \times T^{\frac{d-2}{4}} 
    \int_{\mathbb{R}^{d}} dx G_{1}(x)
    (\mathcal{Z}^{\beta}_{\infty}(x\sqrt{T};\xi(\cdot,\cdot+T))-1)
    }^{2}}
    = o(1),
\end{equation}
where $\mathbf{F}^{\textup{PF};\beta}_{T}(0)$ is given in (\ref{definition:PF_fluctuation}). We now prove \eqref{goal_key_lemma_proof} in the following steps.

\paragraph{Step 1.} Our first step toward \eqref{goal_key_lemma_proof} is to obtain a representation of $\mathbf{F}^{\textup{PF};\beta}_{T}(0)$.
\begin{lemma} \label{lemma:representation_different}
Assume that $d\geq 3$ and $\beta < \beta_{c}$. Recall that $\mathbf{F}^{\textup{PF};\beta}_{T}(0)$ is given in (\ref{definition:PF_fluctuation}).
Then, for every $T>0$, the following holds almost surely:
\begin{align} \label{representation_different}
    &\mathbf{F}^{\textup{PF};\beta}_{T}(0) = \int_{\mathbb{R}^{d}} dx G_{1}(x)\nonumber\\
    &\times \mathbf{E}_{0,0}^{T,\sqrt{T}x}\bigbkt{\exp\bkt{\beta \int_{0}^{T} ds \xi_{1}(B(s),s) - \frac{\beta^{2} R(0) T }{2}}}\times T^{\frac{d-2}{4}} (\mathcal{Z}^{\beta}_{\infty}(x\sqrt{T};\xi(\cdot,\cdot+T))-1),
\end{align}
where the noise $\xi_{1}(\cdot,\cdot)$ is defined in (\ref{definition:spatially_mollified_white_noise}), and $\mathbf{P}_{0,a}^{T,b}$ is defined by the law of the $d$-dimensional Brownian bridge starting from $(0,a)$ to $(T,b)$. 
\end{lemma}
\begin{proof}
The identity (\ref{representation_different}) immediately follows from the Markov property of Brownian motion and (\ref{limit_CDP}).
\end{proof}

\paragraph{Step 2.} To establish \eqref{goal_key_lemma_proof}, we now derive a representation of the left-hand side of \eqref{goal_key_lemma_proof}.

\begin{lemma}
Assume that $d\geq 3$ and $\beta < \beta_{c}$. Then, it holds that
\begin{align} \label{error_representation}
    \text{(L.H.S) of (\ref{goal_key_lemma_proof})} = \int_{\mathbb{R}^{2\times d}} dx_{1} dx_{2} G_{1}(x_{1}) G_{1}(x_{2}) H_{\beta;(0,T)}(x_{1},x_{2}) \times H_{\beta;(T,\infty)}(x_{1},x_{2}).
\end{align}
Here, the kernels $H_{\beta;(0,T)}(x_{1},x_{2})$ and $H_{\beta;(T,\infty)}(x_{1},x_{2})$ are given by
\begin{align}
    H_{\beta;(0,T)}(x_{1},x_{2}) &:= \mathbf{E}_{0,0,0}^{T,*,*}\bigbkt{\exp\bkt{\beta^{2}\int_{0}^{T}R(B_{2}(s)-B_{1}(s)) ds}}\nonumber\\
    &-\mathbf{E}_{0,0,0}^{T,\sqrt{T}x_{2},*}\bigbkt{\exp\bkt{\beta^{2}\int_{0}^{T}R(B_{2}(s)-B_{1}(s)) ds}}\nonumber\\
    &-\mathbf{E}_{0,0,0}^{T,*,\sqrt{T}x_{1}}\bigbkt{\exp\bkt{\beta^{2}\int_{0}^{T}R(B_{2}(s)-B_{1}(s)) ds}}\nonumber\\
    &+\mathbf{E}_{0,0,0}^{T,\sqrt{T}x_{2},\sqrt{T}x_{1}}\bigbkt{\exp\bkt{\beta^{2}\int_{0}^{T}R(B_{2}(s)-B_{1}(s)) ds}} \label{definition:kernel_H_0_T},\\
    H_{\beta;(T,\infty)}(x_{1},x_{2})&:= T^{\frac{d-2}{2}} \bkt{\mathbf{E}_{\sqrt{T}x_{2},\sqrt{T}x_{1}}\bigbkt{\exp\bkt{\beta^{2}\int_{0}^{\infty}R(B_{2}(s)-B_{1}(s)) ds}} -1}, \label{definition:kernel_H_T_infty}
\end{align}
where $\mathbf{P}_{0,0}^{T,*}$ denotes the law of $d$-dimensional Brownian motion $B$ starting at $x \in \mathbb{R}^{d}$ and ending at time $T$ with free endpoint, and $\mathbf{P}_{0,0,0}^{T,a,b}$ is the product measure of $\mathbf{P}_{0,0}^{T,a}$ and $\mathbf{P}_{0,0}^{T,b}$.
\end{lemma}
\begin{proof}
To prove (\ref{error_representation}), we apply Lemma \ref{lemma:representation_different} to obtain
\begin{align} \label{representation_different_1}
    &\mathbf{F}^{\textup{PF};\beta}_{T}(0) - 
    \mathcal{Z}^{\beta}_{T}(0;\xi) \times T^{\frac{d-2}{4}} 
    \int_{\mathbb{R}^{d}} dx G_{1}(x)
    (\mathcal{Z}^{\beta}_{\infty}(x\sqrt{T};\xi(\cdot,\cdot+T))-1)\nonumber\\
    &= \int_{\mathbb{R}^{d}} dx G_{1}(x)\bkt{\mathbf{E}_{0,0}^{T,\sqrt{T}x}\bigbkt{\exp\bkt{\beta \int_{0}^{T} ds \xi_{1}(B(s),s) - \frac{\beta^{2} R(0) T }{2}}} \nonumber\\
    &\quad\quad\quad-\mathbf{E}_{0}\bigbkt{\exp\bkt{\beta \int_{0}^{T} ds \xi_{1}(B(s),s) - \frac{\beta^{2} R(0) T }{2}}}  }\times \bkt{ T^{\frac{d-2}{4}} (\mathcal{Z}^{\beta}_{\infty}(x\sqrt{T};\xi(\cdot,\cdot+T))-1)}.
\end{align}
To evaluate the expectation on the left-hand side of \eqref{goal_key_lemma_proof}, we square the integral with respect to $dx$ on the right-hand side of \eqref{representation_different_1} by re-writing it as a double integral over $x_{1},x_{2}$, and then we take expectation with respect to $\mathbb{P}$. Note that the two bracketed factors on the right-hand side of \eqref{representation_different_1} are independent, since the first depends only on the noise $\sbkt{\xi(x,t)}_{(x,t) \in \mathbb{R}^{d} \times (0,T)}$ while the second depends only on the noise $\sbkt{\xi(x,t)}_{(x,t) \in \mathbb{R}^{d} \times (T,\infty)}$. Thus, the expectation with respect to $\mathbb{P}$ can be factorized, which yields \eqref{error_representation}. 
\end{proof}

\paragraph{Step 3.} To show that the right-hand side of (\ref{error_representation}) vanishes as $T\to\infty$, we investigate the asymptotics of the kernels $H_{\beta;(0,T)}$ and $H_{\beta;(T,\infty)}$, and establish uniform estimates for them. For the sake of simplicity, all the proofs of the following properties will be presented in Section \ref{section:proof_of_trivial_lemma} and Section \ref{section_exponential_functional}, respectively. 

To analyze the kernel function $H_{\beta;(T,\infty)}$, we re-express it as
\begin{align} \label{definition:kernel_H_T_infty_1}
    H_{\beta;(T,\infty)}(x_{1},x_{2}) = T^{\frac{d-2}{2}} \bkt{\mathbf{E}_{\sqrt{T} \frac{x_{2}-x_{1}}{\sqrt{2}}}\bigbkt{\exp\bkt{\beta^{2}\int_{0}^{\infty}R(\sqrt{2}B(s)) ds}} -1}.
\end{align}
\begin{lemma} \label{trivial_lemma}
Recall that $H_{\beta;(T,\infty)}$ is given in (\ref{definition:kernel_H_T_infty_1}).
If $d\geq 3$ and $\beta < \beta_{L^{2}}$, then the following holds:
\begin{enumerate} [label = (\roman*).]
    \item It holds that
    \begin{align} \label{trivial_uniform_bound}
    \sup_{T\geq 1,\; x_{1}\neq x_{2}}|x_{1}-x_{2}|^{d-2} H_{\beta;(T,\infty)}(x_{1},x_{2}) < \infty 
    \end{align}
    \item For each $x_{1} \neq x_{2}$, as $T\to\infty$, we have
    \begin{align}\label{trivial_limit}
        &H_{\beta;(T,\infty)}(x_{1},x_{2}) = \mathcal{G}^{0}(x_{2}-x_{1})  \int_{\mathbb{R}^{d}} dx \beta^{2} R(\sqrt{2}x) \mathbf{E}_{x}\bigbkt{\exp\bkt{\beta^{2}\int_{0}^{\infty}R(\sqrt{2}B(s)) ds}}  +o(1),
    \end{align}
    where $\mathcal{G}^{0}(z)$ is the $d$-dimensional Yukawa potential given by 
    \begin{align} \label{Yukawa}
        \mathcal{G}^{0}(z) = \int_{0}^{\infty} dt G_{t}(z) = \chi_{d}/|z|^{d-2}, \quad \chi_{d} > 0.
    \end{align}
\end{enumerate}
\end{lemma}

Evaluation of the limit of $H_{\beta;(0,T)}$ reduces to analyzing the expected Brownian-bridge exponential functional in \eqref{bridge_bound_formula}.
\begin{proposition} \label{rescaled_additive_functional}
Assume that $d\geq 3$ and $\beta < \beta_{L^{2}}$. Then the following holds:
\begin{enumerate} [label = (\roman*).]
    \item It holds that
    \begin{equation}\label{bridge_bound_formula}
        \sup_{T\geq 1,\; a,b\in \mathbb{R}^{d}} \mathbf{E}_{0,a}^{T,b}\bigbkt{\exp\bkt{\beta^{2}\int_{0}^{T} R(\sqrt{2}B(s)) ds}} < \infty
    \end{equation} 
    \item For each $z \neq 0$, as $T\to\infty$, we have
    \begin{align}\label{bridge_limit_formula}
        \mathbf{E}_{0,0}^{T,\sqrt{T}z}\bigbkt{\exp\bkt{\beta^{2}\int_{0}^{T} R(\sqrt{2}B(s))}}
        = \mathbf{E}_{0}\bigbkt{\exp\bkt{\beta^{2}\int_{0}^{\infty} R(\sqrt{2}B(s))}} + o(1).
    \end{align}
\end{enumerate}
\end{proposition}
\begin{remark}
We thank Professor Clément Cosco for pointing out that the uniform bound \eqref{bridge_bound_formula} was proved in \cite[(4.16)]{COSCO2022127} using the shear invariance of white noise.
But, both Lemma \ref{trivial_lemma} and Proposition \ref{rescaled_additive_functional} extend to a more general setting that does not assume the convolution condition $R = \phi*\phi$; see \eqref{definition_R_epsilon} for details. Instead, it suffices to impose the following decay and regularity on $R$: $R\in L^{1}(\mathbb{R}^{d};\mathbb{R}_{+})$ is bounded and symmetric-decreasing (see Section \ref{model_section} for the definition). Under this assumption and the condition
\begin{align*}
    \beta < \sup\set{\beta>0: \mathbf{E}_{0}\bigbkt{\exp\bkt{\beta^{2} \int_{0}^{\infty} R(\sqrt{2}B(u)) du }  } },
\end{align*}
which is equivalent to the condition $\beta < \beta_{L^{2}}$,
both Lemma \ref{trivial_lemma} and Proposition \ref{rescaled_additive_functional} hold. For example, $R(x) = \mathbf{1}_{\{|x|\leq 1\}}$ satisfies our assumptions but cannot be written as 
$\phi *\phi$ with any $\phi \in L^{1}(\mathbb{R}^{d})$, since $\phi*\phi$ is continuous. From the perspective of studying expected exponential functionals for Brownian bridges and Brownian motions, our proof may be of independent interest.
\end{remark}

\paragraph{Step 4.} We can now conclude Proposition \ref{SHE_key_proposition}.
\begin{proof} [Proof of Proposition \ref{SHE_key_proposition}.]
Observe that the first three terms on the right-hand side of (\ref{definition:kernel_H_0_T}) can be further decomposed using the following idea. For instance, the second term satisfies
\begin{align} \label{trick:two_brownianmotion_to_one_brownianmotion}
    &\mathbf{E}_{0,0,0}^{T,\sqrt{T}x_{2},*}\bigbkt{\exp\bkt{\beta^{2}\int_{0}^{T}R(B_{2}(s)-B_{1}(s)) ds}} = \int_{\mathbb{R}^{d}} dy_{1} G_{1}(y_{1}) \nonumber\\
    &\quad\quad\quad\quad\quad\quad\quad\quad\quad\quad\quad\quad\quad\quad\quad\times\mathbf{E}_{0,0}^{T,\sqrt{T}(x_{2}-y_{1})/\sqrt{2}}\bigbkt{\exp\bkt{\beta^{2}\int_{0}^{T}R(\sqrt{2}B(s)) ds}}.
\end{align}
Here, we have used that $\sbkt{(B_{2}(t)-B_{1}(t))/\sqrt{2}}_{t\geq 0}$ is a Brownian motion. Then, by (\ref{trivial_uniform_bound}) and (\ref{bridge_bound_formula}), the integrand of the integral on the right-hand side of (\ref{error_representation}) can be bounded by
\begin{align*}
    C G_{1}(x_{1})G_{1}(x_{2})\frac{1}{|x_{1}-x_{2}|^{d-2}} \in L^{1}(dx_{1} \otimes dx_{2}).
\end{align*}
Therefore, applying dominated convergence theorem, together with (\ref{trivial_limit}) and (\ref{bridge_limit_formula}), shows that 
the integral on the right-hand side of (\ref{error_representation}) vanishes as $T\to\infty$, which implies
(\ref{goal_key_lemma_proof}). Indeed, since the first three terms on the right-hand side of (\ref{definition:kernel_H_0_T}) can all be rewritten via the same trick as in \eqref{trick:two_brownianmotion_to_one_brownianmotion}, each term on the right-hand side of (\ref{definition:kernel_H_0_T}) has the same limit, namely, the right-hand side of \eqref{bridge_limit_formula}. It follows that
\begin{align*}
    H_{\beta;(0,T)}(x_{1},x_{2}) = o(1) \quad \text{as $T\to\infty$} \quad \forall x_{1}\neq x_{2}.
\end{align*}
\end{proof}

\section{Moment bounds for linearizations} \label{ratio_bound_secion}

The goal of this section is to prove Proposition~\ref{ratio_bound}, which serves as a key ingredient in the proofs of Proposition~\ref{proposition_KPZ_D1} and Proposition~\ref{proposition_KPZ_D2}.

\subsection{Moment bounds for the normalized fluctuation.}
Now we prove (\ref{I_0}) for $\nu = 2$. Let $\delta > 0$ be a parameter to be chosen later. 


\paragraph{Step 1.} We begin by bounding the numerator in $\mathbf{M}_{T;\beta,2^{+}}^{(2)}$, namely $\mathbf{F}^{\textup{PF};\beta}_{T}(0)$. Applying Itô’s formula yields a stochastic–integral representation of
$\mathbf{F}^{\textup{PF};\beta}_{T}(0)$. The estimate \eqref{bound_A_2_step1} then follows applying the Burkholder–Davis–Gundy inequality to this representation.
\begin{lemma} \label{lemma_p_delta}
Assume that $d\geq 3$ and $\beta < \beta_{L^{2}}$, and set $2p := (2+\delta)(1+\delta)$. Then it holds that
\begin{align} 
    &\mathbf{M}_{T;\beta,2+\delta}^{(2)} \leq C(\beta,\delta,d)  \bkt{\int_{1}^{\infty} \frac{d\tau}{\tau^{d/2}} \bignorm{(T\tau)^{d/2} \cdot\mathbf{E}_{0,0}\bigbkt{\prod_{j=1}^{2} \Phi_{T\tau}^{\beta}(B_{j};\xi)\cdot R(B_{2}(T\tau)-B_{1}(T\tau))}}_{L^{p}}}^{\frac{1}{2}},\label{bound_A_2_step1}\\
    &\Phi_{\tau}^{\beta}(B;\xi):=\exp\bkt{\beta \int_{0}^{\tau} ds \xi_{1}(B(s),s) - \frac{\beta^{2} R(0) \tau }{2}}. \nonumber
\end{align}
\end{lemma}
\begin{proof}
We start by removing the denominator in $\mathbf{M}_{T;\beta,2^{+}}^{(2)}$. Applying Hölder's inequality and the negative moment bound (\ref{negative_moment_bound}) gives
\begin{align} \label{bound_A_holder}
    \mathbf{M}_{T;\beta,2+\delta}^{(2)} \leq ||\mathcal{Z}_{T}^{\beta}(0;\xi)^{-1}||_{L^{2p/\delta}} \cdot ||\mathbf{F}^{\textup{PF};\beta}_{T}(0)||_{L^{2p}} \lesssim ||\mathbf{F}^{\textup{PF};\beta}_{T}(0)||_{L^{2p}}.
\end{align}
To control the right-hand side of (\ref{bound_A_holder}), we observe that for every $T' \geq T$, 
\begin{align} \label{martingale}
    &T^{\frac{d-2}{4}} (\mathcal{Z}^{\beta}_{T'}(0;\xi
    ) - \mathcal{Z}^{\beta}_{T}(0;\xi
    )) = T^{\frac{d-2}{4}}\beta \int_{T}^{T'} d\tau \int_{\mathbb{R}^{d}} dx \mathbf{E}_{0}\bigbkt{ \Phi_{\tau}^{\beta}(B;\xi)\phi(B(\tau)-x)} \xi(x,\tau),
\end{align}
where we have applied Itô's formula to the right-hand side of (\ref{partition_function_1}). In addition, we notice that the right-hand side of (\ref{martingale}) is a martingale indexed by $T' \geq T$ with quadratic variation 
\begin{align*}
    &\langle T^{\frac{d-2}{4}} (\mathcal{Z}^{\beta}_{\cdot}(0;\xi
    ) - \mathcal{Z}^{\beta}_{T}(0;\xi
    )) \rangle (T') = T^{\frac{d}{2}}\beta^{2} \int_{1}^{T'/T} d\tau \mathbf{E}_{0,0}\bigbkt{\prod_{j=1}^{2} \Phi_{T\tau}^{\beta}(B_{j};\xi)\cdot R(B_{2}(T\tau)-B_{1}(T\tau))},
\end{align*}
where we have used the change of variable $\tau \mapsto \tau/T$. Therefore, applying Burkholder-Davis-Gundy inequality \cite[Theorem 3.28]{MR917065} yields that
\begin{align*}
    &\text{(R.H.S) of (\ref{bound_A_holder})} \lesssim\mathbb{E}\bigbkt{\abs{\beta^{2} \int_{1}^{\infty} \frac{d\tau}{\tau^{d/2}} (T\tau)^{\frac{d}{2}} \cdot \mathbf{E}_{0,0}\bigbkt{\prod_{j=1}^{2} \Phi_{T\tau}^{\beta}(B_{j};\xi)\cdot R(B_{2}(T\tau)-B_{1}(T\tau))}}^{p}}^{\frac{1}{2p}}.
\end{align*}
In particular, by using Minkowski inequality, we conclude the estimate (\ref{bound_A_2_step1}).
\end{proof}

\paragraph{Step 2.} To bound $\mathbf{M}_{T;\beta,2+\delta}^{(2)}$, it is enough to verify the uniform estimate \eqref{bound_A_2_step2}, thanks to (\ref{bound_A_2_step1}). The idea is to control the right-hand side of \eqref{bound_A_2_step2} by using the expected Brownian-bridge exponential functional in \eqref{bridge_bound_formula}.
\begin{lemma} \label{lemma_brdige_bound}
Assume that $d\geq 3$ and $\beta < \beta_{L^{2}}$. Then, when $\delta$ is small enough, it holds that
\begin{align} \label{bound_A_2_step2}
    \sup_{T \geq 1} \bignorm{T^{d/2} \cdot\mathbf{E}_{0,0}\bigbkt{\prod_{j=1}^{2} \Phi_{T}^{\beta}(B_{j};\xi)\cdot R(B_{2}(T)-B_{1}(T))}}_{L^{p}} < \infty.
\end{align} 
\end{lemma}
\begin{proof}
To show (\ref{bound_A_2_step2}), we first re-express the random variable in (\ref{bound_A_2_step2}) as follows:
\begin{align*}
    &T^{d/2} \mathbf{E}_{0,0}\bigbkt{\prod_{j=1}^{2} \Phi_{T}^{\beta}(B_{j};\xi)\cdot R(B_{2}(T)-B_{1}(T))} \\
    &= T^{d/2} \int_{\mathbb{R}^{2\times d}} dx_{1} dx_{2} G_{T}(x_{1}) G_{T}(x_{2}) R(x_{2}-x_{1}) \mathbf{E}_{0,0}^{T,x_{1}}[\Phi_{T}^{\beta}(B_{1};\xi)] \mathbf{E}_{0,0}^{T,x_{2}}[\Phi_{T}^{\beta}(B_{2};\xi)]\\
    &= \int_{\mathbb{R}^{2\times d}} dz dz' \sbkt{T^{\frac{d}{2}}\cdot G_{T}(z)} G_{T}(z') R(\sqrt{2}z) \mathbf{E}_{0,0}^{T,(z'-z)/\sqrt{2}}[\Phi_{T}^{\beta}(B_{1};\xi)] \mathbf{E}_{0,0}^{T,(z'+z)/\sqrt{2}}[\Phi_{T}^{\beta}(B_{2};\xi)],
\end{align*}
where we have used the following change of variable and the identity in the last equality:
\begin{align} \label{change_of_variable:two_body}
    (x_{1},x_{2}) \mapsto (z,z') := \bkt{\frac{x_{2}-x_{1}}{\sqrt{2}},\frac{x_{2}+x_{1}}{\sqrt{2}}}, \quad G_{T}(x_{1})G_{T}(x_{2}) = G_{T}(z) G_{T}(z').
\end{align}
As a result, one has
\begin{align} \label{bound_A_2_step2_1}
    &\bignorm{T^{d/2} \cdot\mathbf{E}_{0,0}\bigbkt{\prod_{j=1}^{2} \Phi_{T}^{\beta}(B_{j};\xi)\cdot R(B_{2}(T)-B_{1}(T))}}_{L^{p}}\nonumber\\
    &\lesssim \int_{\mathbb{R}^{2 \times d}} dzdz' R(\sqrt{2}z) G_{T}(z') \bignorm{ \mathbf{E}_{0,0}^{T,(z'-z)/\sqrt{2}}\bigbkt{\Phi_{T}^{\beta}(B;\xi)}}_{L^{2p}} \bignorm{ \mathbf{E}_{0,0}^{T,(z'+z)/\sqrt{2}}\bigbkt{\Phi_{T}^{\beta}(B;\xi)}}_{L^{2p}},
\end{align}
where we have applied Minkowski’s inequality, Hölder's inequality, and the estimate 
\begin{align*}
    \sup_{z \in \mathbb{R}^{d}} T^{d/2} G_{T}(z) \lesssim 1.
\end{align*}

We now derive a spatially uniform estimate for the two $L^{2p}$-norms on the right-hand side of (\ref{bound_A_2_step2_1}). The key point is to apply the hypercontractivity \cite[Theorem 5.1]{GHS}. To apply this, we observe that  
\begin{align} \label{chaos_expansion}
    &\mathbf{E}_{0,0}^{T,z}\bigbkt{\Phi_{T}^{\beta}(B;\xi)} = 1+\sum_{k=1}^{\infty} \beta^{k} \int_{\mathbb{R}^{d\times k}}
    \int_{0 < t_{1}<\ldots<t_{k} < T} 
    \theta_{T;k}(t_{1},q_{1},\ldots,t_{k},q_{k})
    \prod_{j=1}^{k} dt_{j} dq_{j} 
    \xi(q_{j},t_{j}),\\
    &\theta_{T;k}(t_{1},q_{1},\ldots,t_{k},q_{k}) := G_{T}(z)^{-1}\int_{\mathbb{R}^{d \times k}, \; z_{0} = 0}
    \bkt{
    \prod_{j=1}^{k} dz_{j} G_{t_{j} - t_{j-1}}(z_{j} - z_{j-1}) \phi(q_{j} - z_{j}) }
    \cdot G_{T-t_{k}}(z-z_{k}). \nonumber
\end{align}
Indeed, \eqref{chaos_expansion} can be obtained by applying the following consequence of Itô's formula iteratively:
\begin{align*}
    \Phi_{T}^{\beta}(B;\xi) = 1 + \beta\int_{0}^{T} dt \int_{\mathbb{R}^{d}} dx \Phi_{t}^{\beta}(B;\xi) \phi(B(t)-x) \xi(x,t).
\end{align*}
Therefore, applying \cite[Theorem 5.1]{GHS} with $A = (2p-1)^{-1/2} I$, we conclude 
\begin{align} \label{bound_A_2_step2_2}
    \bignorm{ \mathbf{E}_{0,0}^{T,z}\bigbkt{\Phi_{T}^{\beta}(B;\xi)}}_{L^{2p}} \leq \bignorm{ \mathbf{E}_{0,0}^{T,z}\bigbkt{\Phi_{T}^{\beta_{p}}(B;\xi)}}_{L^{2}}, \quad \text{where } \beta_{p} := \beta \cdot (2p-1)^{1/2}.
\end{align}
Here $I$ is the identity map on the Gaussian Hilbert space induced by the white noise $\xi$. Hence, we have
\begin{align} \label{bound_A_2_step2_3}
    \bignorm{ \mathbf{E}_{0,0}^{T,z}\bigbkt{\Phi_{T}^{\beta_{p}}(B;\xi)}}_{L^{2}}
    = \mathbf{E}_{0,0}^{T,0}\bigbkt{\exp\bkt{\beta_{p}^{2}\int_{0}^{T} R(\sqrt{2}B(s)) ds}}^{\frac{1}{2}} < \infty.
\end{align}
Indeed, since $\beta_{p} < \beta_{L^{2}}$ (achieved by choosing $\delta$ sufficiently small), the inequality in \eqref{bound_A_2_step2_3} follows from (\ref{bridge_bound_formula}). Therefore, combining (\ref{bound_A_2_step2_1}), (\ref{bound_A_2_step2_2}), and (\ref{bound_A_2_step2_3}) yields the required bound 
(\ref{bound_A_2_step2}).
    
\end{proof}

\paragraph{Step 3.} Combining Lemma \ref{lemma_p_delta} and Lemma \ref{lemma_brdige_bound} concludes (\ref{I_0}) for $\nu = 2$. The proof is complete.

\subsection{Moment bounds for the fluctuation of the free energy.}
Let us now establish (\ref{I_0}) for $\nu = 1$. Fix a parameter $\delta > 0$ to be chosen later. Our approach is to control $\mathbf{M}_{T;\beta,2+\delta}^{(1)}$ by using $\mathbf{M}_{T;\beta,2^{+}}^{(2)}$. We begin by noting that the inequality
\begin{align*}
    |\log(1+x)| \leq c|x| \quad \forall |x| \leq \frac{1}{2}, \quad c>0
\end{align*}
implies the following decomposition of $\mathbf{M}_{T;\beta,2+\delta}^{(1)}$:
\begin{align} \label{bound_A_1_1}
    \mathbf{M}_{T;\beta,2+\delta}^{(1)}
    \lesssim \mathbb{E}\bigbkt{\abs{
    T^{\frac{d-2}{4}} \sbkt{\log \mathcal{Z}^{\beta}_{\infty}(0;\xi) - \log \mathcal{Z}^{\beta}_{T}(0;\xi)}}^{2+\delta}; 
    \abs{
    \frac{\mathcal{Z}^{\beta}_{\infty}(0;\xi) - \mathcal{Z}^{\beta}_{T}(0;\xi)}{\mathcal{Z}^{\beta}_{T}(0;\xi)}
    }\geq \frac{1}{2}
    }^{\frac{1}{2+\delta}} + \mathbf{M}_{T;\beta,2+\delta}^{(2)}.
\end{align}
Next, by Hölder's inequality and Markov’s inequality, the first term on the right-hand side of (\ref{bound_A_1_1}) can be bounded as
\begin{align*} 
    &\text{(L.H.S) of (\ref{bound_A_1_1})} \\
    &\leq C(\beta)
    T^{\frac{d-2}{4}} \mathbb{P}\bkt{
    \abs{
    \frac{\mathcal{Z}_{\infty}^{\beta}(0;\xi) - \mathcal{Z}^{\beta}_{T}(0;\xi)}{\mathcal{Z}^{\beta}_{T}(0;\xi)}
    }\geq \frac{1}{2}
    }^{\frac{1}{(1+\delta)(2+\delta)}}\lesssim C(\beta) 
    \bignorm{T^{\frac{d-2}{4}}
    \frac{\mathcal{Z}_{\infty}^{\beta}(0;\xi) -\mathcal{Z}^{\beta}_{T}(0;\xi)}{\mathcal{Z}^{\beta}_{T}(0;\xi)}}_{L^{(2+\delta)(1+\delta)}}\\
    &= C(\beta) \mathbf{M}_{T;\beta,(2+\delta)(1+\delta)}^{(2)}, \quad C(\beta):=\sup_{1\leq T\leq \infty}\snorm{\log \mathcal{Z}_{T}^{\beta}(0;\xi)}_{L^{\frac{(2+\delta)(1+\delta)}{\delta}}}.
\end{align*}
Therefore, by choosing $\delta>0$ sufficiently small so that $\mathbf{M}_{T;\beta,(2+\delta)(1+\delta)}^{(2)} < \infty$, we obtain an uniform bound for $\mathbf{M}_{T;\beta,2+\delta}^{(1)}$. It remains to check that $C(\beta) < \infty$ for every $\beta < \beta_{L^{2}}$. Indeed, if $m \in \mathbb{N}$, then 
\begin{align}
    \sup_{T\geq 1}\mathbb{E}
    \bigbkt{
    \frac{\sabs{2\log \mathcal{Z}_{T}^{\beta}(0;\xi)}^{m}}{m!};\mathcal{Z}_{T}^{\beta}(0;\xi) > 1}
    &\leq \sup_{T\geq 1}\mathbb{E}[\exp(2\log \mathcal{Z}_{T}^{\beta}(0;\xi))]
    \leq \mathbb{E}[\mathcal{Z}_{\infty}^{\beta}(0;\xi)^{2}] < \infty, \nonumber\\
    \sup_{T\geq 1}\mathbb{E}[\sabs{\log \mathcal{Z}_{T}^{\beta}(0;\xi)}^{m}; 0 < \mathcal{Z}_{T}^{\beta}(0;\xi) < 1] &\leq \sup_{T\geq 1}\mathbb{E}[\mathcal{Z}_{T}^{\beta}(0;\xi)^{-m}] <\infty, \label{estimate_0_z_1}
\end{align}
where the second estimate follows from $-\log(x) \leq 1/x$ for all $0 < x < 1$ and from (\ref{negative_moment_bound}).

\section{Large-scale limits for expected Brownian functionals} \label{section:proof_of_trivial_lemma}
Let us now verify Lemma \ref{trivial_lemma}.
We begin by re-expressing $H_{\beta;(T,\infty)}(x_{1},x_{2})$ from \eqref{definition:kernel_H_T_infty} as 
\begin{align}
    H_{\beta;(T,\infty)}(x_{1},x_{2})
    = T^{\frac{d-2}{2}}\bkt{\mathfrak{h}_{\beta}\bkt{\sqrt{T}\cdot \frac{x_{2}-x_{1}}{\sqrt{2}}}-1}, \quad \mathfrak{h}_{\beta}(z) := \mathbf{E}_{z}\bigbkt{\exp\bkt{\beta^{2}\int_{0}^{\infty}R(\sqrt{2}B(s)) ds}}. \label{identity_H_and_small_h}
\end{align}
Before proving Lemma \ref{trivial_lemma}, we first analyze the properties of the function $\mathfrak{h}_{\beta}(z)$.
\begin{lemma} \label{lemma_h_beta_property_1}
Assume that $d\geq 3$ and $\beta<\beta_{L^{2}}$. Then the following holds:
\begin{enumerate} [label = (\roman*).]
    \item It holds that $\sup_{z\in \mathbb{R}^{d}} \mathfrak{h}_{\beta}(z) \leq \mathfrak{h}_{\beta}(0) < \infty$.
    \item $\mathfrak{h}_{\beta}(\cdot)$ satisfies
    \begin{align}
        \mathfrak{h}_{\beta}(\sqrt{T}z) = 1+\int_{\mathbb{R}^{d}} dx\mathcal{G}^{0}(\sqrt{T}z - x) \beta^{2} R(\sqrt{2}x) \mathfrak{h}_{\beta}(x), \quad \mathcal{G}^{0}(z) \text{is defined in \eqref{Yukawa}.}\label{integral_equation_h_beta}
    \end{align}
\end{enumerate}
\end{lemma}
\begin{proof}
Fix $z\neq 0$. Recalling from Section \ref{model_section} that $\phi$ is spherically symmetric, we know that
$R(x) = \phi*\phi(x)$ is spherically symmetric, which implies $\mathfrak{h}_{\beta}(z)$ is spherically symmetric. In particular, by considering the first hitting time $\aleph_{|z|}$ of $\partial B(0,|z|)$ by Brownian motion, one has
\begin{align*}
    \mathbf{E}_{0}\bigbkt{\exp\bkt{\beta^{2}\int_{0}^{\infty}R(\sqrt{2}B(s)) ds}} 
    &=\mathbf{E}_{0}\bigbkt{\exp\bkt{\beta^{2}\int_{0}^{\aleph_{|z|}}R(\sqrt{2}B(s)) ds} \mathfrak{h}_{\beta}(B(\tau))}\\
    &=\mathbf{E}_{0}\bigbkt{\exp\bkt{\beta^{2}\int_{0}^{\aleph_{|z|}}R(\sqrt{2}B(s)) ds} } \mathfrak{h}_{\beta}(z) \geq \mathfrak{h}_{\beta}(z),
\end{align*}
which gives the first inequality in part (i) of Lemma \ref{lemma_h_beta_property_1}. The second inequality in part (i) follows from $\beta < \beta_{L^{2}}$. 

Let us now verify the integral equation (\ref{integral_equation_h_beta}). By using fundamental theorem of calculus, one has
\begin{align*} 
    \exp\bkt{\beta^{2}\int_{0}^{\infty} R(\sqrt{2}B(s)) dt} = 1+ \int_{0}^{\infty} dt \beta ^{2}R(\sqrt{2}B(t)) \exp\bkt{\beta^{2}\int_{t}^{\infty} R(\sqrt{2}B(u)) du}.
\end{align*}
As a result, applying the Markov property of Brownian
motion shows that
\begin{align*}
    \mathfrak{h}_{\beta}(\sqrt{T} z) &= 1 + \int_{\mathbb{R}^{d}} dx \mathcal{G}^{0}(\sqrt{T}z-x) \beta^{2} R(\sqrt{2}x) \mathfrak{h}_{\beta}(x).
\end{align*}

\end{proof}

Now, we are ready to conclude Lemma \ref{trivial_lemma}.  The idea is to analyze the integral equation \eqref{integral_equation_h_beta}.
\begin{proof} [Proof of Lemma \ref{trivial_lemma}]
We start with showing (\ref{trivial_limit}). Fix $z\neq 0$, and split the integral in \eqref{integral_equation_h_beta} as
\begin{align*}
    &T^{\frac{d-2}{2}} (\mathfrak{h}_{\beta}(\sqrt{T}z) - 1)
    = \int_{\mathbb{R}^{d}} dx T^{\frac{d-2}{2}}\mathcal{G}^{0}(\sqrt{T}z-x) \beta^{2} R(\sqrt{2}x) \mathfrak{h}_{\beta}(x) \mathbf{1}_{\{|x-\sqrt{T}z|\geq \frac{1}{2}|z| \sqrt{T}\}}\\
    &+ \int_{\mathbb{R}^{d}} dx T^{\frac{d-2}{2}}\mathcal{G}^{0}(\sqrt{T}z-x) \beta^{2} R(\sqrt{2}x) \mathfrak{h}_{\beta}(x) \mathbf{1}_{\{|x-\sqrt{T}z|\leq \frac{1}{2}|z| \sqrt{T}\}}
    =: \mathbf{J}_{T}^{\geq} + \mathbf{J}_{T}^{\leq}.
\end{align*}
We first observe that applying part (i) of Lemma \ref{lemma_h_beta_property_1} and a change of variable yields
\begin{align*}
    \mathbf{J}_{T}^{\leq} \lesssim \int_{\mathbb{R}^{d}} dx \frac{1}{|z-x|^{d-2}} R_{\varepsilon}(\sqrt{2}x) \mathbf{1}_{\{|x-z|\leq \frac{1}{2}|z|\}} = o(1) \quad \text{as} \quad T \to \infty, \quad T = \varepsilon^{-2}.
\end{align*}
Here we have used that $R \in L^{1}(\mathbb{R}^{d};\mathbb{R}_{+})$. Now, applying dominated convergence theorem to $\mathbf{J}_{T}^{\geq}$ yields
\begin{align} \label{convergent_rate_small_h}
    T^{\frac{d-2}{2}} (\mathfrak{h}_{\beta}(\sqrt{T}z) - 1) 
    &= \mathcal{G}^{0}(z) \cdot \int_{\mathbb{R}^{d}} dx \beta^{2} R(\sqrt{2}x) \mathfrak{h}_{\beta}(x)+o(1) \quad \text{as } T\to\infty,
\end{align}
where we have used that for each $x \in \mathbb{R}^{d}$, $\mathbf{1}_{\{|x-\sqrt{T}z|\geq \frac{1}{2}|z| \sqrt{T}\}}$ tends to $1$ as $T\to \infty$. Here, to bound the integrand of the integral on the right-hand side of (\ref{integral_equation_h_beta}), we use part (i) of Lemma \ref{lemma_h_beta_property_1} to bound $\mathfrak{h}_{\beta}(x)$, and the estimate 
\begin{align*}
    \sup_{T\geq 1, \; |x-\sqrt{T}z| \geq \frac{1}{2}|z|\sqrt{T}} \frac{T^{(d-2)/2}}{|\sqrt{T}z-x|^{d-2}} R(\sqrt{2}x) \lesssim \frac{1}{|z|^{d-2}} R(\sqrt{2}x) \in L^{1}(\mathbb{R}^{d})
\end{align*}
to bound $T^{(d-2)/2}\mathcal{G}^{0}(\sqrt{T}z-x) R(\sqrt{2}x)$. Therefore, combining (\ref{convergent_rate_small_h}) with (\ref{identity_H_and_small_h}) implies (\ref{trivial_limit}).

Now we turn to (\ref{trivial_uniform_bound}). Observe that
\begin{align}\label{estimate_small_h}
    &\sup_{T\geq 1, \; z\neq 0}|\sqrt{T}z|^{d-2} (\mathfrak{h}_{\beta}(\sqrt{T}z) - 1) 
    \lesssim \sup_{T\geq 1, \; z\neq 0} \int_{\mathbb{R}^{d}} dx\frac{(|\sqrt{T}z|-|x|+|x|)^{d-2}}{|\sqrt{T} z-x|^{d-2}} \beta^{2} R(\sqrt{2}x) \mathfrak{h}_{\beta}(x)\nonumber\\
    &\lesssim \int_{\mathbb{R}^{d}} dx\beta^{2} R(\sqrt{2}x) \mathfrak{h}_{\beta}(x) + \sup_{T\geq 1, \; z\neq 0}\int_{\mathbb{R}^{d}} dx\frac{|x|^{d-2}}{|\sqrt{T} z-x|^{d-2}} \beta^{2} R(\sqrt{2}x) \mathfrak{h}_{\beta}(x) < \infty,
\end{align}
where we have used $(|\sqrt{T}z|-|x|+|x|)^{d-2} \lesssim |\sqrt{T}z-x|^{d-2} + |x|^{d-2}$ in the second inequality in (\ref{estimate_small_h}), and we have applied part (i) of Lemma \ref{lemma_h_beta_property_1} and the following estimate to conclude the last inequality in (\ref{estimate_small_h}):
\begin{align*}
    \sup_{z\in \mathbb{R}^{d}} \int_{\mathbb{R}^{d}} dx \frac{|x|^{d-2}}{|z-x|^{d-2}} R(\sqrt{2}x) < \infty.
\end{align*}
Here we have used that $R \in L^{1}(\mathbb{R}^{d})$. Consequently, combining \eqref{estimate_small_h} with (\ref{identity_H_and_small_h}) yields (\ref{trivial_uniform_bound}). 
\end{proof}

\section{Expected Brownian-bridge exponential functionals} \label{section_exponential_functional}
The goal of this section is to prove Proposition~\ref{rescaled_additive_functional}. For each $a,b \in \mathbb{R}^{d}$ and any $T\geq 1$, we set
\begin{align} \label{CabT}
    \mathbf{A}_{\beta}(a,b,T)
    :=\mathbf{E}_{0,a}^{T,b}\bigbkt{\exp\bkt{\beta^{2}\int_{0}^{T} R(\sqrt{2}B(s)) ds}}.
\end{align}

\subsection{Uniform estimates.} \label{section_uniform_bound_rescaled}
This subsection is dedicated to the verification of (\ref{bridge_bound_formula}). 

\paragraph{Step 1.} We start with 
a simplification indicating that it is enough to consider the case that $a = b = 0$.
\begin{lemma} \label{lemma_a_b_r}
Let $\mathbf{A}_{\beta}(a,b,T)$ be defined in (\ref{CabT}). Then 
\begin{align}
    \sup_{a,b \in \mathbb{R}^{d}, \; T\geq 1}\mathbf{A}_{\beta}(a,b,T) &\leq 
    \sup_{T\geq 1}\mathbf{A}_{\beta}(0,0,T).\label{a_b_r}
\end{align}
\end{lemma}
\begin{proof}
Fix $a,b \in \mathbb{R}^{d}$ and $T\geq 1$. Applying the representation of the Brownian bridge \cite[the solution of (6.23)]{MR917065} yields
\begin{align} \label{functional_bridge_form}
    \mathbf{A}_{\beta}(a,b,T) = \mathbf{E}_{0}\bigbkt{\exp\bkt{\beta^{2} \int_{0}^{T} R\bkt{\sqrt{2}\bkt{a+B(u)-\frac{u}{T}(B(T)-(b-a)) } } du }}.
\end{align}
In addition, we decompose the right-hand side of \eqref{functional_bridge_form} as 
\begin{align}\label{functional_bridge_form_series_representation}
    &\mathbf{A}_{\beta}(a,b,T) = 1+\sum_{k=1}^{\infty} \beta^{2k} \int_{0<t_{1}<\ldots<t_{k}<T} \prod_{j=1}^{k} dt_{j} \int_{\mathbb{R}^{(k+1) \times d}, \; z_{0} = 0}  \prod_{j=1}^{k+1} dz_{j} \nonumber\\
    &\times \bkt{\prod_{j=1}^{k} G_{t_{j}-t_{j-1}}(z_{j}-z_{j-1}) R\bkt{\sqrt{2} \bkt{z_{j} - \frac{t_{j}}{T}z_{k+1}  + \bkt{a+\frac{t_{j}}{T}(b-a)}  } } } G_{T-t_{k}}(z_{k+1}-z_{k}).
\end{align}
Here we have used the following consequence of fundamental theorem of calculus:
\begin{align*}
    &\exp\bkt{\beta^{2} \int_{0}^{\ell} R\bkt{\sqrt{2}\bkt{a+B(u)-\frac{u}{T}(B(T)-(b-a)) } } du }\\
    &= 1+\int_{0}^{\ell} dt \beta^{2}  R\bkt{\sqrt{2}\bkt{a+B(t)-\frac{t}{T}(B(T)-(b-a)) } } \\
    &\quad\quad\quad\quad\quad\quad\times \exp\bkt{\beta^{2} \int_{0}^{t} R\bkt{\sqrt{2}\bkt{a+B(u)-\frac{u}{T}(B(T)-(b-a)) } } du }, \quad \ell \geq 0.
\end{align*}
Therefore, by applying general rearrangement inequality \cite[Theorem 3.8]{lieb2001analysis} to the spatial integral on the right-hand side of \eqref{functional_bridge_form_series_representation}, we obtain
\begin{align*}
    &\mathbf{A}_{\beta}(a,b,T) \leq 1+\sum_{k=1}^{\infty} \beta^{2k} \int_{0<t_{1}<\ldots<t_{k}<T} \prod_{j=1}^{k} dt_{j} \int_{\mathbb{R}^{(k+1) \times d}, \; z_{0} = 0}  \prod_{j=1}^{k+1} dz_{j} \\
    &\times \bkt{\prod_{j=1}^{k} G_{t_{j}-t_{j-1}}(z_{j}-z_{j-1}) R\bkt{\sqrt{2} \bkt{z_{j} -\frac{t_{j}}{T}z_{k+1}   } }} G_{T-t_{k}}(z_{k+1}-z_{k}) = \mathbf{A}_{\beta}(0,0,T),
\end{align*}
where we have used that $R$ is symmetric-decreasing. The proof is complete.
\end{proof}

\paragraph{Step 2.} Due to Lemma \ref{lemma_a_b_r}, we now control $\mathbf{A}_{\beta}(0,0,T)$, where $\mathbf{A}_{\beta}(a,b,T)$ is given in (\ref{CabT}). 
\begin{lemma} \label{lemma:a_b_less_r}
Suppose that $d\geq 3$ and $\beta < \beta_{L^{2}}$. Then it holds that
\begin{align} \label{a_b_less_r}
    \sup_{T\geq 1}\mathbf{A}_{\beta}(0,0,T) < \infty.
\end{align}    
\end{lemma}
Before proceeding with the proof, let us first explain the main idea behind (\ref{a_b_less_r}), which relies on the decomposition
\begin{alignat}{2}
    \mathbf{A}_{\beta}(0,0,T) &= 1 + \sum_{k=1}^{\infty} \mathbf{A}_{\beta;k}(0,0,T), \label{decomposition_path_integral}\\
    \mathbf{A}_{\beta;k}(0,0,T) &:= \beta^{2k} G_{T}(0)^{-1} \int_{\mathbb{R}^{k \times d},\; z_{0} = z_{k+1} = 0} \prod_{j=1}^{k} dz_{j} \int_{0<\sum_{j=1}^{k} u_{j}<T, \; u_{k+1} = T-\sum_{j=1}^{k} u_{j}} \prod_{j=1}^{k} du_{j}\nonumber\\
    &\times \prod_{j=1}^{k} G_{u_{j}}(z_{j}-z_{j-1}) R(\sqrt{2}z_{j}) \times G_{u_{k+1}}(z_{k+1}-z_{k}). \label{definition_A_k}
\end{alignat}
The decomposition (\ref{decomposition_path_integral}) is obtained by iteratively applying the identity
\begin{align}  \label{FTC}
    \exp\bkt{\beta^{2}\int_{0}^{T} R(\sqrt{2}B(t)) dt} = 1+ \int_{0}^{T} dt \beta ^{2}R(\sqrt{2}B(t)) \exp\bkt{\beta^{2}\int_{t}^{T} R(\sqrt{2}B(u)) du}.
\end{align}
together with a change of variables in the time variables. The identity \eqref{FTC} follows directly from the fundamental theorem of calculus. Let us now explain our approach to estimate $\mathbf{A}_{\beta;k}(0,0,T)$. Notice that 
\begin{align*}
    G_{T}(0)^{-1} \lesssim T^{\frac{d}{2}}.
\end{align*}
Hence, how to cancel this $T^{\frac{d}{2}}$ is the key point to prove the estimate (\ref{a_b_less_r}). The key observation is that $\sum_{j=1}^{k} u_{j} = T$ on the right-hand side of (\ref{definition_A_k}), and hence, there exists a period $u_{j} > T/(k+1)$. To this end, one can cancel $T^{\frac{d}{2}}$ by using the estimate
\begin{align} \label{estimate:key_point}
    G_{u_{j}}(z_{j}-z_{j-1}) \lesssim (k+1)^{\frac{d}{2}} T^{-\frac{d}{2}}.
\end{align}

Now we are ready to conclude (\ref{a_b_less_r}). 

\begin{proof} [Proof of Lemma \ref{lemma:a_b_less_r}]
Fix $k\geq 1$. According to the aforementioned idea, we have
\begin{align} \label{first_decomposition}
    \mathbf{A}_{\beta;k}(0,0,T) \leq \sum_{i=1}^{k+1} \mathbf{A}_{\beta;k,i}(0,0,T),
\end{align}
where $\mathbf{A}_{\beta;k,i}(0,0,T)$ is defined as the right-hand side of (\ref{definition_A_k}) with an additional integrand $\mathbf{1}_{\{u_{i} > T/(k+1)\}}$. Fix $1\leq i\leq k+1$. We now bound $\mathbf{A}_{\beta;k,i}(0,0,T)$ as follows.  By (\ref{estimate:key_point}), the term $T^{\frac{d}{2}}G_{u_{i}}(z_{i}-z_{i-1})$ in $\mathbf{A}_{\beta;k,i}(0,0,T)$ can be bounded by $C_{1} \cdot (k+1)^{\frac{d}{2}}$. Moreover, the factor $(k+1)^{\frac{d}{2}}$ can be absorbed by slightly increasing $\beta$, namely,
\begin{align*}
    (1+m)^{\frac{d}{2}}\cdot \beta^{2m} \leq C_{2} \beta_{*}^{2m} \quad \forall m\geq 1,
\end{align*}
where $\beta_{*} \in (\beta, \beta_{L^{2}})$. Next, we perform a change of variables in  $\mathbf{A}_{\beta;k,i}(0,0,T)$ for $i\neq k+1$. Originally, $u_{i}$ is a variable, while $u_{k+1}$ is determined by $u_{k+1} := T- \sum_{j=1}^{k} u_{j}$. In our change of variable, we instead set $u_{k+1}$ as a variable and define $u_{i}$ by the function $u_{i} := T-\sum_{1\leq j\leq k+1, \; j\neq i} u_{j}$. As a result, we conclude
\begin{alignat}{2}
    &\mathbf{A}_{\beta;k,i}(0,0,T) \leq (C_{1}C_{2}) \mathfrak{h}_{\beta_{*},i-1}(0) \cdot \mathfrak{h}_{\beta_{*};k-i+1}&&(0) \quad \forall 1\leq i\leq k+1, \label{second_decomposition}\\
    &\mathfrak{h}_{\beta_{*},0}(z) := 1, \quad \mathfrak{h}_{\beta_{*},m}(z) := \beta_{*}^{2m}\int_{\mathbb{R}^{m \times d},\; z_{0} = z} &&\prod_{j=1}^{m} dz_{j} \int_{\mathbb{R}_{+}^{m}} \prod_{j=1}^{m} du_{j} \nonumber\\
    & &&\times \prod_{j=1}^{m} G_{u_{j}}(z_{j}-z_{j-1}) R(\sqrt{2}z_{j}) \quad \forall m\geq 1. \label{definition_h_k}
\end{alignat}
Therefore, by combining (\ref{decomposition_path_integral}), (\ref{first_decomposition}), and (\ref{second_decomposition}), we deduce
\begin{align} \label{inequality_A_hh}
    \mathbf{A}_{\beta}(0,0,T) \leq 1+ (C_{1}C_{2})\mathfrak{h}_{\beta_{*}}(0) \cdot \mathfrak{h}_{\beta_{*}}(0) < \infty,
\end{align}
where $\mathfrak{h}_{\beta_{*}}(z)$ is defined in \eqref{identity_H_and_small_h}. Here we used the fact that
\begin{align} \label{sum_h}
    \mathfrak{h}_{\beta_{*}}(z) = \sum_{k=0}^{\infty} \mathfrak{h}_{\beta_{*};k}(z),
\end{align}
which can be proved analogously to \eqref{decomposition_path_integral} using the fundamental theorem of calculus. The final inequality in \eqref{inequality_A_hh} follows from part (i) of Lemma \ref{lemma_h_beta_property_1}.
\end{proof}

\subsection{Large-scale asymptotics.} \label{section_asymptotic_rescaled}

Let us now verify (\ref{bridge_limit_formula}). Recall that $\mathbf{A}_{\beta}(a,b,T)$ is defined in (\ref{CabT}). Before proceeding with the proof, we begin by explaining the proof idea. Fix $z\neq 0$, and set $\Lambda_{T} = o(T)$ such that $\Lambda_{T} \to \infty$ as $T\to\infty$. Now we decompose the right-hand side of (\ref{decomposition_path_integral}) with $a = 0$ and $b = \sqrt{T}z$ as follows:
\begin{alignat}{2}
    & &&\mathbf{A}_{\beta}(0,\sqrt{T}z,T) =  \bkt{1+\sum_{k=1}^{\infty} \mathbf{A}^{(1)}_{\beta;k}(0,\sqrt{T}z,T)} + \bkt{\sum_{k=1}^{\infty} \mathbf{A}^{(2)}_{\beta;k}(0,\sqrt{T}z,T)}, \label{decomposition_Lambda}\\
    & &&\quad\mathbf{A}^{(1)}_{\beta;k}(a,b,T) := \beta^{2k} G_{T}(a-b)^{-1} \int_{\mathbb{R}^{k \times d},\; z_{0} = a, \; z_{k+1} = b} \prod_{j=1}^{k} dz_{j} \int_{0 = t_{0} < t_{1} < \ldots < t_{k} < \Lambda_{T}} \prod_{j=1}^{k} dt_{j}\nonumber\\
    & &&\times \prod_{j=1}^{k} G_{t_{j}-t_{j-1}}(z_{j}-z_{j-1}) R(\sqrt{2}z_{j}) \times G_{T-t_{k}}(z_{k+1}-z_{k}), \label{definition_A_k_(1)}\\
    & &&\quad\mathbf{A}^{(2)}_{\beta;k}(a,b,T) := \beta^{2k} G_{T}(a-b)^{-1} \int_{\mathbb{R}^{k \times d},\; z_{0} = a, \; z_{k+1} = b} \prod_{j=1}^{k} dz_{j} \int_{0 = t_{0} < t_{1} < \ldots < t_{k} < T, \; \exists t_{i} > \Lambda_{T}} \prod_{j=1}^{k} dt_{j}\nonumber\\
    & &&\times \prod_{j=1}^{k} G_{t_{j}-t_{j-1}}(z_{j}-z_{j-1}) R(\sqrt{2}z_{j}) \times G_{T-t_{k}}(z_{k+1}-z_{k}). \label{definition_A_k_(2)}
\end{alignat}
To prove (\ref{bridge_limit_formula}), we rely on applying a sequential version of dominated
convergence theorem \cite[Exercises 5.23]{ZZZ} to the summations on the right-hand side of (\ref{decomposition_Lambda}). As a result, it is enough to show that for each $k\geq 1$, as $T\to\infty$,
\begin{align} \label{limit_goal_first} 
    \mathbf{A}^{(1)}_{\beta;k}(0,\sqrt{T}z,T) = \mathfrak{h}_{\beta;k}(0)+o(1) 
\end{align}
and
\begin{align} \label{limit_goal}
    \mathbf{A}^{(2)}_{\beta;k}(0,\sqrt{T}z,T) = o(1),
\end{align}
where $\mathfrak{h}_{\beta;k}(0)$ is defined in (\ref{definition_h_k}). Let us now explain the reason. By combining (\ref{first_decomposition}) and (\ref{second_decomposition}), one has
\begin{align*}
    \mathbf{A}^{(\nu)}_{\beta;k}(0,\sqrt{T}z,T) \leq (C_{1}C_{2}) \sum_{i=1}^{k+1} \mathfrak{h}_{\beta_{*},i-1}(0) \cdot \mathfrak{h}_{\beta_{*};k-i+1}(\sqrt{T}z), \quad \nu = 1,2.
\end{align*}
In addition, thanks to (\ref{sum_h}) and (\ref{convergent_rate_small_h}), it holds that
\begin{align}
    \sum_{k=1}^{\infty} \sum_{i=1}^{k+1} \mathfrak{h}_{\beta,i-1}(0) \cdot \mathfrak{h}_{\beta;k-i+1}(\sqrt{T}z) = \mathfrak{h}_{\beta}(0) \mathfrak{h}_{\beta}(\sqrt{T}z) = \mathfrak{h}_{\beta}(0) + o(1) \quad \text{as } T \to\infty.
\end{align}
Therefore, (\ref{bridge_limit_formula}) follows by applying \cite[Exercises 5.23]{ZZZ} to \eqref{decomposition_Lambda} together with \eqref{limit_goal_first} and (\ref{limit_goal}).

We are now ready to conclude (\ref{bridge_limit_formula}).
\begin{proof} [Proof of (\ref{bridge_limit_formula})]
Based on the foregoing idea, it is enough to show (\ref{limit_goal_first}) and (\ref{limit_goal}). To begin with, we observe that (\ref{limit_goal_first}) follows from dominated convergence theorem. Indeed, since $z \neq 0$ and $\Lambda_{T} = o(T)$ so that $\Lambda_{T} \to \infty$, it follows that
\begin{align}
    &\frac{G_{T - t_{k}}(\sqrt{T}z - z_{k})}{G_{T}(\sqrt{T}z)} = 1+o(1) \quad \text{as $T\to\infty$} \quad \forall t_{k}>0, \; z_{k} \in \mathbb{R}^{d}, \label{tail_gaussian_limit}\\
    \sup_{t_{k} < \Lambda_{T}, \; z_{k} \in \mathbb{R}^{d}} &\frac{G_{T - t_{k}}(\sqrt{T}z - z_{k})}{G_{T}(\sqrt{T}z)} 
    \leq \exp(|z|^{2}/2) \quad \forall T\geq 1. \label{tail_gaussian_estimate}
\end{align}
In particular, applying \eqref{tail_gaussian_estimate} shows that the integrand of $\mathbf{A}{\beta;k}^{(1)}(0,\sqrt{T}z,T)$ is bounded above by the integrand of $\mathfrak{h}{\beta;k}(0)$. The latter, defined in (\ref{definition_h_k}), is finite by part (i) of Lemma \ref{lemma_h_beta_property_1}. Therefore, since $\Lambda_{T} \to\infty$, applying dominated convergence theorem together with \eqref{tail_gaussian_limit} implies \eqref{limit_goal_first}.

Let us now turn to the proof of (\ref{limit_goal}). For this purpose, observe that
\begin{align} \label{decomposition_A_2}
    &\mathbf{A}_{\beta;k}^{(2)}(0,\sqrt{T}z,T) \leq (C_{1}C_{2}) \bkt{\sum_{i=1}^{k} \mathfrak{h}_{\beta_{*};i-1}(0) \cdot \mathfrak{h}_{\beta_{*};k-i+1}(\sqrt{T}z)\nonumber\\
    &+\beta_{*}^{2k} \int_{\mathbb{R}^{k \times d},\; z_{0} = 0} \prod_{j=1}^{k} dz_{j} \int_{0 = t_{0} < t_{1} < \ldots < t_{k} < T, \; \exists t_{i} > \Lambda_{T}} \prod_{j=1}^{k} dt_{j}\times \prod_{j=1}^{k} G_{t_{j}-t_{j-1}}(z_{j}-z_{j-1}) R(\sqrt{2}z_{j})}. 
\end{align}
Indeed, we bound $\mathbf{A}_{\beta;k}^{(2)}(0,\sqrt{T}z,T)$ as (\ref{first_decomposition}) and (\ref{second_decomposition}). But, in the final term (i.e., when $i = k+1$), we keep the conditions $\exists t_{i} > \Lambda_{T}$ and $0<t_{1}<\ldots<t_{k}<T$. Therefore, $\mathbf{A}_{\beta;k}^{(2)}(0,\sqrt{T}z,T) = o(1)$ follows from (\ref{decomposition_A_2}). Indeed, due to \eqref{convergent_rate_small_h} and \eqref{sum_h},
the first term on the right-hand side of (\ref{decomposition_A_2}) tends to zero since $\mathfrak{h}_{\beta;m}(\sqrt{T}z) \to 0$ for every $m\geq 1$.  The second term on the right-hand side of (\ref{decomposition_A_2}) converges to zero thanks to dominated convergence theorem and $\mathbf{1}_{\{\exists t_{i} > \Lambda_{T}\}} \to 0$. Here we have used that $\Lambda_{T} \to \infty$. As a result, we complete the proof of both of (\ref{limit_goal_first}) and (\ref{limit_goal}), which implies (\ref{bridge_limit_formula}).

\end{proof}

\section{Fluctuations for the continuous directed polymer.} \label{section_proof_CDP}
This subsection is devoted to the proof of Corollary \ref{corollary_CDP} and Corollary \ref{remark_stability}. Corollary \ref{corollary_CDP} follows directly by combining Theorem \ref{ST_SHE_and_KPZ} and the identities (\ref{SHE_D1}) and (\ref{KPZ_D0}) with $t_{j} = 1$. Hence, we only focus on the proof of Corollary \ref{remark_stability}. Before proceeding, we begin by decomposing the random fields $\mathbf{F}^{\textup{PF};\beta}_{T}(x_{j})$ and $\mathbf{F}^{\textup{PF};\beta}_{T}(x_{j})$. Fix a collection $(x_{j})_{1\leq j\leq M}$. We apply Proposition \ref{SHE_key_proposition} with $t = 1$, and apply Proposition \ref{proposition_KPZ_D1} and Proposition \ref{proposition_KPZ_D2}, also evaluated at $t_{j} = 1$, to obtain the following approximations:
\begin{align} 
    &\mathbf{F}^{\textup{PF};\beta}_{T}(x_{j})
    = \mathcal{Z}^{\beta}_{T}(x_{j}\sqrt{T};\xi) \times T^{\frac{d-2}{4}}\int_{\mathbb{R}^{d}} dy G_{1}(x_{j} - y)
    (\mathcal{Z}^{\beta}_{\infty}(y\sqrt{T};\xi(\cdot,\cdot+T))-1) + o(1) \quad\text{in } L^{2}, \label{approximation_partition_function_1}\\
    &\mathbf{F}^{\textup{FE};\beta}_{T}(x_{j})
    = T^{\frac{d-2}{4}}\int_{\mathbb{R}^{d}} dy G_{1}(x_{j} - y)
    (\mathcal{Z}^{\beta}_{\infty}(y\sqrt{T};\xi(\cdot,\cdot+T))-1)+o(1) \quad \text{in } L^{1}. \label{approximation_free_energy_1}
\end{align}
For convenience, let $\mathcal{X} \in \sigma(\xi)$ and let $\Gamma \subseteq \mathbb{R}^{d}\times \mathbb{R}_{+}$ be a measurable set, then, for simplicity, we say
\begin{align} \label{notation:sigma}
    \mathcal{X} \in \sigma(\xi(x,t): (x,t) \in \Gamma) 
    \quad\text{whenever}\quad
    \mathcal{X}\in \sigma\sbkt{ \xi(\mathbf{1}_{A}): A\subseteq \Gamma \text{ is a measurable subset}}.
\end{align}

\subsection{Proof of Corollary \ref{remark_stability}.}


We begin with a few observations to simplify the proof. Let $A$ be a measurable set in $\sigma(\xi)$, with $\mathbb{P}(A) > 0$. To prove Corollary \ref{remark_stability}, it is enough to compute $\mathbb{E}[\exp(i\Vec{\lambda}\cdot \Vec{X}_{T})|A]$ for every $\Vec{\lambda} \in \mathbb{R}^{M}$, where $\Vec{X}_{T}$ stands for $\sbkt{\mathbf{F}^{\textup{PF};\beta}_{T}(x_{j})}_{1\leq j\leq M}$ or $\sbkt{\mathbf{F}^{\textup{FE};\beta}_{T}(x_{j})}_{1\leq j\leq M}$. This is because the limiting distribution of $\mathbb{P}(\Vec{X}_{T}\in \cdot |A)$ yields the limiting distribution of the pair $(\Vec{X}_{T},W)$. Moreover, by a standard density argument, it is enough to consider the case where $A \in \sigma(\xi(x,t):(x,t) \in K)$, for some compact subset $K \subseteq \mathbb{R}^{d} \times \mathbb{R}_{+}$. 

\paragraph{Step 1.} Let us first work on the case of $\sbkt{\mathbf{F}^{\textup{FE};\beta}_{T}(x_{j})}_{1\leq j\leq M}$. Observe that the right-hand side of (\ref{approximation_free_energy_1}) depends on the noise $(\xi(x,t))_{(x,t)\in \mathbb{R}^{d} \times (T,\infty)}$, and is therefore independent of $A$ when $T$ is large enough, since $A \in \sigma(\xi(x,t):(x,t) \in K)$. Consequently, by using (\ref{SHE_gaussian_limit_0}) with $t_{j} = 1$, we know that 
\begin{align*}
    \mathbb{E}\bigbkt{\exp\bkt{i\Vec{\lambda}\cdot \sbkt{\mathbf{F}^{\textup{FE};\beta}_{T}(x_{j})}_{1\leq j\leq M}}; A}
    \overset{T\to\infty}{\longrightarrow} 
    \mathbb{E}\bigbkt{\exp\bkt{i\Vec{\lambda} \cdot \sbkt{\mathbf{U}^{\beta}(x_{j},1)}_{1\leq j\leq M}}}.
\end{align*}
The proof of (\ref{proposition:fe1}) is complete.

\paragraph{Step 2.} We now turn to the case of $\sbkt{\mathbf{F}^{\textup{PE};\beta}_{T}(x_{j})}_{1\leq j\leq M}$, with $x_{1} = 0$. That is, we aim to establish 
\begin{align} \label{proof:CCP_goal}
    \mathbb{E}\bigbkt{\exp\bkt{i\Vec{\lambda}\cdot \sbkt{\mathbf{F}^{\textup{PF};\beta}_{T}(x_{j})}_{1\leq j\leq M}}; A}
    &\overset{T\to\infty}{\longrightarrow} 
    \mathbb{E}\bigbkt{\exp\bkt{i\Vec{\lambda}(1)\cdot \sbkt{\mathcal{Z}_{T}^{\beta}(0;\xi) \cdot \mathbf{U}^{\beta}(0,1)}\nonumber\\
    &+ i(\Vec{\lambda}(j))_{2\leq j\leq M} \cdot \sbkt{\mathbf{Z}^{\beta}(x_{j},1)\cdot \mathbf{U}^{\beta}(x_{j},1)}_{2\leq j\leq M}}}.
\end{align}
We begin by deriving a sufficient condition for \eqref{proof:CCP_goal} to simplify the proof. Similarly, we notice that the second term on the right-hand side of (\ref{approximation_partition_function_1}) is independent of the first term on the right-hand side of (\ref{approximation_partition_function_1}) and the event $A$ when $T$ is large enough, since $A \in \sigma(\xi(x,t): (x,t) \in K)$. Moreover, the convergence in distribution of the second term on the right-hand side of (\ref{approximation_partition_function_1}) follows from (\ref{SHE_gaussian_limit_0}) with $t_{j} = 1$. Consequently, to prove (\ref{proof:CCP_goal}), it remains to show the following, due to (\ref{approximation_partition_function_1}):
\begin{align} \label{proof:measure_convergence}
    \mathbb{P}\sbkt{\sbkt{\mathcal{Z}^{\beta}_{T}(x_{j}\sqrt{T};\xi)}_{1\leq j\leq M} \in \cdot |A} \overset{d}{\longrightarrow} \mathbb{P}(\mathcal{Z}_{\infty}^{\beta}(0;\xi)\in \cdot |A) \times \mathbb{P}\sbkt{\sbkt{\mathbf{Z}^{\beta}(x_{j},1)}_{2\leq j\leq M} \in \cdot}.
\end{align}
To verify (\ref{proof:measure_convergence}), we show the convergence of $\mathbb{E}[\exp\sbkt{i\Vec{\lambda'}\cdot \sbkt{\mathcal{Z}^{\beta}_{T}(x_{j}\sqrt{T};\xi)}_{1\leq j\leq M}} | A]$ for every $\Vec{\lambda'} \in \mathbb{R}^{M}$. Our approach is to show that, when $T$ is large, the input noise of $\sbkt{\mathcal{Z}^{\beta}_{T}(x_{j}\sqrt{T};\xi)}_{2\leq j\leq M}$ can be replaced by the noises $\xi_{2},\ldots,\xi_{M}$. Here, $\xi,\xi_{2},\ldots,\xi_{M}$ are i.i.d. space-time white noises. 

For this purpose, we recall Lemma \ref{lemma_ restricted_partition_functions} as follows: 
\begin{equation} \label{stable_approximate}
    \mathcal{Z}_{T}^{\beta}(\sqrt{T}x_{j};\xi)
    = \mathcal{Z}_{\tau_{T};A_{T}}^{\beta}(\sqrt{T}x_{j};\xi) + o(1) \quad\text{in } L^{1} \quad 1\leq j\leq M,
\end{equation}
where $\tau_{T} \to \infty$ such that $\tau_{T} = o(\sqrt{T})$, and $A_{t}$ is given in \eqref{definition_A_T}. Therefore, it holds that
\begin{align} \label{proof:CCP_1}
    \mathbb{E}\bigbkt{\exp\bkt{i\Vec{\lambda'}\cdot \sbkt{\mathcal{Z}^{\beta}_{T}(x_{j}\sqrt{T};\xi)}_{1\leq j\leq M}} ; A} = \mathbb{E}\bigbkt{\exp\bkt{i\Vec{\lambda'}\cdot \sbkt{\mathcal{Z}_{\tau_{T};A_{T}}^{\beta}(\sqrt{T}x_{j};\xi)}_{1\leq j\leq M}} ; A} + o(1).
\end{align}

Now, we analyze the noise used by the restricted partition function on the right-hand side of (\ref{stable_approximate}). To this end, following the proof of Lemma \ref{lemma_asymptotic_independence} with $\delta_{j} = 0$, we note that the components of the vector $(\mathcal{Z}^{\beta}_{\tau_{T};A_{T}}(x_{j}\sqrt{T};\xi))_{1\leq j\leq M}$ occupy disjoint regions of the noise $\xi$ when $T$ is sufficiently large. As a result, the components of the vector $(\mathcal{Z}^{\beta}_{\tau_{T};A_{T}}(x_{j}\sqrt{T};\xi))_{2\leq j\leq M}$ are independent. In particular, they are independent of  $(\mathcal{Z}^{\beta}_{\tau_{T};A_{T}}(0;\xi),A)$, since $A \in \sigma(\xi(x,t): (x,t) \in K)$. Therefore, we conclude
\begin{align} \label{proof:CCP_2}
    \text{(R.H.S) of (\ref{proof:CCP_1})}
    = \mathbb{E}\bigbkt{\exp\bkt{i\Vec{\lambda'}\cdot \sbkt{\mathcal{Z}_{\tau_{T};A_{T}}^{\beta}(\sqrt{T}x_{j};\xi_{j})}_{1\leq j\leq M}} ; A},
\end{align}
where $(\xi_{j})_{2\leq j\leq M}$ are i.i.d. space-time white noises that are independent of $\xi_{1} := \xi$. Consequently, after recovering $(\mathcal{Z}^{\beta}_{\tau_{T};A_{T}}(x_{j}\sqrt{T};\xi_{j}))_{1\leq j\leq M}$ by using (\ref{stable_approximate}), we obtain 
\begin{align*}
    \text{(R.H.S) of (\ref{proof:CCP_2})} = \mathbb{E}\bigbkt{\exp\bkt{i\Vec{\lambda'}\cdot \sbkt{\mathcal{Z}_{T}^{\beta}(0;\xi_{j})}_{1\leq j\leq M}} ; A} + o(1).
\end{align*}
Therefore, thanks to (\ref{limit_CDP}), the proof of (\ref{proof:measure_convergence}) is complete.

\paragraph{Step 3.} Finally, the case of $x_{j} \neq 0$ follows from (\ref{proof:CCP_goal}) with $\Vec{\lambda}(1) = 0$. The proof of Corollary \ref{remark_stability} is complete.

\appendix

\section{Limiting approximation scheme} \label{section_asymptotics of the renormalizations}

The goal of this section is to establish the limit of $\mathbf{X}^{\beta}_{\varepsilon}(x,t)$ at multiple space–time points.

\begin{lemma}\label{pointwise_convergence}
Assume that $d\geq 3$ and $\beta < \beta_{c}$, where $\beta_{c}$ is defined in (\ref{limit_CDP}). Then, as $\varepsilon\to 0$, it holds that
\begin{equation} \label{pointwise_convergence_formula_1}
    \sbkt{\mathbf{X}^{\beta}_{\varepsilon}(x,t)}_{(x,t) \in \mathbb{R}^{d} \times \mathbb{R}_{+}}
    \overset{\textup{f.d.m.}}{\longrightarrow} \sbkt{\mathbf{Z}^{\beta}(x,t)}_{(x,t) \in \mathbb{R}^{d} \times \mathbb{R}_{+}}.
\end{equation}
Here, the random field $\sbkt{\mathbf{Z}^{\beta}(x,t)}_{(x,t) \in \mathbb{R}^{d} \times \mathbb{R}_{+}}$ is defined in Theorem \ref{ST_SHE_and_KPZ}.
\end{lemma}

Before proceeding, we explain the heuristics behind (\ref{pointwise_convergence_formula_1}). To prove Lemma \ref{pointwise_convergence}, it is enough to show 
\begin{equation} \label{pointwise_convergence_formula_proof_section_goal_1}
    \sbkt{\mathcal{Z}^{\beta}_{t_{j} T}(x_{j}\sqrt{T};\xi(\cdot,\cdot+\delta_{j} T))}_{1\leq j\leq M}
    \overset{d}{\longrightarrow} \sbkt{\mathbf{Z}^{\beta}(x_{j},t_{j})}_{1\leq j\leq M} \quad \text{as } T\to\infty,
\end{equation}
by using (\ref{SHE_D0}). Here $(x_{j},t_{j})_{1\leq j\leq M}$ is a collection of space-time points defined in (\ref{definition:space_time_points}) and $\delta_{j}$ is defined in (\ref{notation:change_of_variable}). The idea is to establish the \emph{asymptotic independence} of the components on the left-hand side of (\ref{pointwise_convergence_formula_proof_section_goal_1}). As a result, the convergence \eqref{pointwise_convergence_formula_proof_section_goal_1} follows from (\ref{limit_CDP}). The key observation is that these components depend on disjoint regions of the white noise, which is inspired by the proof of \cite[Lemma 3.1]{COSCO2022127}.

\paragraph{Step 1.} We begin by approximating the partition function by removing the region of the white noise that the polymer is unlikely to visit with high probability.
\begin{lemma} \label{lemma_ restricted_partition_functions}
Assume that $d\geq 3$ and $\beta<\beta_{c}$. Then, for each $t > 0$, it holds that
\begin{equation} \label{path_restriction}
    \mathcal{Z}_{tT}^{\beta}(\sqrt{T}x;\xi)
    = \mathcal{Z}_{\tau_{T};A_{T}}^{\beta}(\sqrt{T}x;\xi) + o(1) \quad\text{in } L^{1} \quad \text{as $T\to\infty$},
\end{equation}
where $\tau_{T} \to\infty$ such that $\tau_{T} = o(\sqrt{T})$, $A_{T}$ is defined by
\begin{align} \label{definition_A_T}
    A_{T}:= \set{w \in C([0,\tau_{T}];\mathbb{R}^{d}): \sup_{0\leq s\leq \tau_{T}}|w(s) - w(0)|\leq \tau_{T}},
\end{align}
and the restricted partition function is given by
\begin{equation}\label{path_restriction_def}
    \mathcal{Z}_{T;\Gamma}^{\beta}(x;\xi)
    := \mathbf{E}_{x}\bigbkt{\exp\bkt{\beta \int_{0}^{T} ds \xi_{1}(B(s),s) - \frac{\beta^{2} R(0) T }{2}};(B(s))_{0\leq s\leq T} \in \Gamma}.
\end{equation}
\end{lemma}
\begin{proof}
The approximation (\ref{path_restriction}) follows by combining 
\begin{equation} \label{1_4}
    \mathbb{E}[\sabs{\mathcal{Z}^{\beta}_{tT}(\sqrt{T}x;\xi)-\mathcal{Z}^{\beta}_{\tau_{T}}(\sqrt{T}x;\xi)}] = o(1)
\end{equation}
and
\begin{align*}
    \mathbb{E}[\sabs{\mathcal{Z}^{\beta}_{\tau_{T}}(\sqrt{T}x;\xi)-\mathcal{Z}^{\beta}_{\tau_{T};A_{T}}(\sqrt{T}x;\xi)}]
    = \mathbf{P}_{0}(\sup_{t\leq \tau_{T}}|B(t)|> \tau_{T})
    =\mathbf{P}_{0}(\sup_{t\leq 1/\tau_{T}}|B(t)|> 1)
    = o(1).
\end{align*}
Here, (\ref{1_4}) is a immediate consequence of the uniformly integrability mentioned above  (\ref{limit_CDP}).
\end{proof}

\paragraph{Step 2.} We can now show the asymptotic independence. 
\begin{lemma} \label{lemma_asymptotic_independence}
When $T$ is large enough, the restricted partition functions $(\mathcal{Z}^{\beta}_{\tau_{T};A_{T}}(x_{j}\sqrt{T};\xi(\cdot,\cdot+\delta_{j} T)))_{1\leq j\leq M}$ are independent.     
\end{lemma}
\begin{proof}
Recall the notation (\ref{notation:sigma}). According to \eqref{path_restriction_def}, we know that 
\begin{align} \label{meausurability}
    \mathcal{Z}^{\beta}_{\tau_{T};A_{T}}(x_{j}\sqrt{T};\xi(\cdot,\cdot+\delta_{j} T)) \in \sigma\bkt{ \xi(y,s): \delta_{j}T\leq s\leq \tau_{T}+\delta_{j}T \text{ and } |y - x_{j}\sqrt{T}| \leq \tau_{T}+r_{\phi}},
\end{align}
where $r_{\phi}$ is a positive number such that $\text{supp}(\phi) \subseteq B(0,r_{\phi})$. For all sufficiently large $T$, the independence follows from $\Gamma_{T}(i) \cap \Gamma_{T}(j) = \emptyset \quad \forall i\neq j$. Here, $\Gamma_{T}(j)$ denotes the collection of space-time points $(y,s)$ satisfying the conditions on the right-hand side of \eqref{meausurability}. To see this disjointness, suppose for contradiction that there exist indices $i<j$ such that $\Gamma_{T}(i) \cap \Gamma_{T}(j) \neq \emptyset$ for infinitely many $T$. Then one has
\begin{equation*}
    \delta_{i}T \leq \tau_{T}+\delta_{j} T 
    \quad \text{and}\quad |x_{i}\sqrt{T}-x_{j}\sqrt{T}| \leq 2\tau_{T}+2r \quad \text{for infinitely many $T$},
\end{equation*}
which is a contradiction. Indeed, since we have either $t_{i} < t_{j}$ or $x_{i} \neq x_{j}$, the former shows that $\delta_{i}T > \tau_{T}+\delta_{j}T$ for all sufficiently large $T$, while the second case yields $0<|x_{i} - x_{j}|$.
\end{proof}

\paragraph{Step 3.} Thanks to Lemma \ref{lemma_asymptotic_independence}, one has
\begin{align*}
    &\mathbb{P}\bkt{\mathcal{Z}^{\beta}_{\tau_{T};A_{T}}(x_{1}\sqrt{T};\xi(\cdot,\cdot+\delta_{1}T)) \in \cdot,\ldots,\mathcal{Z}^{\beta}_{\tau_{T};A_{T}}(x_{M}\sqrt{T};\xi(\cdot,\cdot+\delta_{M}T))\in \cdot}\nonumber\\
    &= \mathbb{P}\bkt{\mathcal{Z}^{\beta}_{\tau_{T};A_{T}}(x_{1}\sqrt{T};\xi(\cdot,\cdot+\delta_{1}T)) \in \cdot} \times\ldots \times\mathbb{P}\bkt{\mathcal{Z}^{\beta}_{\tau_{T};A_{T}}(x_{M}\sqrt{T};\xi(\cdot,\cdot+\delta_{M}T))\in \cdot}\nonumber\\
    &= \mathbb{P}\bkt{\mathcal{Z}^{\beta}_{\tau_{T};A_{T}}(0;\xi) \in \cdot} \times\ldots \times\mathbb{P}\bkt{\mathcal{Z}^{\beta}_{\tau_{T};A_{T}}(0;\xi)\in \cdot},
\end{align*}
where we have used the translation-invariance of
the white noise in the last equality. Hence, Lemma \ref{pointwise_convergence} follows from (\ref{limit_CDP}) after we recover each restricted partition functions by using Lemma \ref{lemma_ restricted_partition_functions}.

\section{Spatially averaged fluctuations} \label{section:fluctuation_overline}
The purpose of this section is to prove (\ref{fluctuation:overline}). As $\varepsilon \to 0$, the following approximations hold in $L^{1}$-sense:
\begin{alignat}{2}
    & \overline{\mathbf{F}}^{\textup{SHE};\beta}_{\varepsilon}&&(f_{j},t_{j})
    = \int_{\mathbb{R}^{d}} dx f_{j}(x)\mathcal{Z}^{\beta}_{t_{j} T}(x\sqrt{T};\xi^{(T^{-1/2},0,t)}(\cdot,\cdot+\delta_{j} T))\nonumber\\
    & &&\times T^{\frac{d-2}{4}}\int_{\mathbb{R}^{d}} dy G_{t_{j}}(x - y)
    (\mathcal{Z}^{\beta}_{\infty}(y\sqrt{T};\xi^{(T^{-1/2},0,t)}(\cdot,\cdot+t_{M}T))-1) + o(1) \label{error:overline_fluctuation_1},\\
    & \overline{\mathbf{F}}^{\textup{KPZ};\beta}_{\varepsilon}&&(f_{j},t_{j})
    = \int_{\mathbb{R}^{d}} dx f_{j}(x)T^{\frac{d-2}{4}}\int_{\mathbb{R}^{d}} dy G_{t_{j}}(x - y)
    (\mathcal{Z}^{\beta}_{\infty}(y\sqrt{T};\xi^{(T^{-1/2},0,t)}(\cdot,\cdot+t_{M}T))-1)+o(1), \label{error:overline_fluctuation_2}
\end{alignat}
where $T = \varepsilon^{-2}$ and $\delta_{j}$ is defined in (\ref{notation:change_of_variable}). Indeed, (\ref{error:overline_fluctuation_1}) follows by using (\ref{SHE_D0_1}) and (\ref{SHE_key_lemma_formula}), and (\ref{error:overline_fluctuation_2}) follows from by combining (\ref{SHE_D0_1}), (\ref{KPZ_D1}), and (\ref{KPZ_D2}). Also, applying some homogenization argument yields the following $L^{1}$-approximation:
\begin{align} \label{error:overline_fluctuation_3}
    \text{(R.H.S) of (\ref{error:overline_fluctuation_1})}& = \int_{\mathbb{R}^{d}} dx f_{j}(x)T^{\frac{d-2}{4}}\int_{\mathbb{R}^{d}} dy G_{t_{j}}(x - y)
    (\mathcal{Z}^{\beta}_{\infty}(y\sqrt{T};\xi^{(T^{-1/2},0,t)}(\cdot,\cdot+t_{M}T))-1)+o(1).
\end{align}
Consequently, since $\xi^{(T^{-1/2},0,t)}(\cdot,\cdot+t_{M}T)$ is a white noise, (\ref{fluctuation:overline}) now follows by applying (\ref{corollary_2.7}) with test function $f_{j}* G_{t_{j}}$ to the right-hand sides of (\ref{error:overline_fluctuation_2}) and (\ref{error:overline_fluctuation_3}). 

\section{Non-convergence of the conditional variance} \label{section:proof_of_lemma:not_converge_in_probability}
This section is dedicated to verifying Lemma \ref{lemma:not_converge_in_probability}. Our strategy is to approximate the conditional variance $(V^{\Vec{\lambda}}_{T,\infty})^{2}$ from \eqref{definition:conditional_variance} by an expression of the form given on the right-hand side of \eqref{sum_lambda}, and then show that the convergence of $(V^{\Vec{\lambda}}_{T,\infty})^{2}$ contradicts \eqref{limit_CDP}; see \eqref{definition:widetidle_V} and Lemma \ref{lemma_not_convergence}, respectively.

Let $M\geq 2$, and fix a non-zero $\Vec{\lambda} \in \mathbb{R}^{M}$. According to the definition (\ref{definition:conditional_variance}), applying the Markov property of Brownian motion, we obtain the following expression for the conditional variance $(V^{\Vec{\lambda}}_{T,\infty})^{2}$:
\begin{align}
    (V^{\Vec{\lambda}}_{T,\infty})^{2} &= \sum_{1\leq j,j'\leq M } \Vec{\lambda}(j) \Vec{\lambda}(j')\sum_{k=0}^{\infty} T^{\frac{d-2}{2}} \int_{\mathbb{R}^{2d}} dy_{j} dy'_{j'} \mathcal{Z}_{T+k-1}^{\beta}(\sqrt{T}x_{j},y_{j};\xi) \mathcal{Z}_{T+k-1}^{\beta}(\sqrt{T}x_{j'},y'_{j'};\xi) \nonumber\\
    &\quad\quad\quad\quad\quad\quad \times \bkt{ \mathbf{E}^{B}_{\frac{y_{j}-y'_{j'}}{\sqrt{2}}}\bigbkt{\exp\bkt{\beta^{2}\int_{0}^{1}R(\sqrt{2}B(s)) ds}} -1}, \nonumber\\
    \mathcal{Z}^{\beta}_{N}(x,y;\xi)
    &:= \mathbf{E}^{B}_{x}\bigbkt{\exp\bkt{\beta \int_{0}^{N} ds \xi_{1}(B(s),s) - \frac{\beta^{2} R(0) N }{2}} ; B(N) = y}. \label{expression:conditional_variance}
\end{align}

As the idea explained in \cite[Section 2.2]{GFDDP}, we have the following approximation of the conditional variance. We omit the detailed verification, as it follows from the analogous idea in \cite[Section 2.2]{GFDDP}, with \cite[Theorem 2.3]{GFDDP} replaced by \cite[Theorem B.1]{COSCO2022127}.

\begin{lemma} \label{lamma:expression:conditional_variance_1}
Assume that $d\geq 3$, $\beta < \beta_{L^{2}}$, and $M\geq 2$. Let $\tau_{k} \to \infty$ such that $\tau_{k} = o(k^{a})$ for some $a < 1/2$. Then the following holds in $L^{1}$:
\begin{align} \label{expression:conditional_variance_1}
    &(V^{\Vec{\lambda}}_{T,\infty})^{2} = \sum_{1\leq j,j'\leq M } \Vec{\lambda}(j) \Vec{\lambda}(j') \mathcal{Z}_{\tau_{T}}^{\beta}(\sqrt{T}x_{j};\xi)\mathcal{Z}_{\tau_{T}}^{\beta}(\sqrt{T}x_{j'};\xi) \nonumber\\
    &\times\sum_{k=0}^{\infty} T^{\frac{d-2}{2}} \int_{\mathbb{R}^{d}} dy_{j} dy'_{j'} 
    \mathbf{E}^{B}_{\frac{y_{j}-y'_{j'}}{\sqrt{2}}}\bigbkt{\exp\bkt{\beta^{2}\int_{0}^{\tau_{k}}R(\sqrt{2}B(s)) ds}}
    G_{T+k}(\sqrt{T}x_{j} - y_{j})G_{T+k}(\sqrt{T}x_{j'} - y'_{j'})
    \nonumber\\
    &\times\bkt{ \mathbf{E}^{B}_{\frac{y_{j}-y'_{j'}}{\sqrt{2}}}\bigbkt{\exp\bkt{\beta^{2}\int_{0}^{1}R(\sqrt{2}B(s)) ds}} -1} + o(1).
\end{align}
\end{lemma}
\begin{remark}
Formally, the approximation (\ref{expression:conditional_variance_1}) can be viewed as the continuous analogue of \cite[(32)]{GFDDP}, where the first and the second Brownian expectations correspond to the second moment of the backward partition function and constant $\kappa(\beta)$, respectively.
\end{remark}
For each $1\leq j,j'\leq M$, we set $C_{T}(j,j')$ to be the coefficient defined by the second and the third lines on the right-hand side of (\ref{expression:conditional_variance_1}). Let us now show that $C_{T}(j,j')$ converges to the corresponding the coefficient in $(U^{\Vec{\lambda}})^{2}$, where $U^{\Vec{\lambda}}$ is defined in (\ref{limting_martinagle_ST}).
\begin{lemma} \label{lemma:final_coefficient}
Assume that $d\geq 3$, $\beta < \beta_{L^{2}}$, and $M\geq 2$. Recall that $\gamma(\beta)^{2}$ is defined in (\ref{gamma_and_R}). Then, as $T\to \infty$,
\begin{align} \label{final_coefficient}
    C_{T}(j,j') = \gamma^{2}(\beta) \int_{0}^{\infty} G_{2s+2}(x_{j}-x_{j'}) ds + o(1) \quad \forall 1\leq j,j' \leq M.
\end{align}
\end{lemma}
\begin{proof}
For simplicity, we only outline the main ideas. By applying a change of variable (\ref{change_of_variable:two_body}) to $(y_{j},y'_{j})$, and by replacing $\tau_{k}$ by infinite via using the condition $\beta<\beta_{L^{2}}$, $C_{T}(j,j')$ can be approximated by
\begin{align*}
    C_{T}(j,j') &=   \int_{\mathbb{R}^{d}} dz \sum_{k=0}^{\infty} T^{\frac{d-2}{2}}G_{T+k}\bkt{\sqrt{T}\cdot \frac{x_{j}-x_{j'}}{\sqrt{2}} - z} \\
    &\times \mathbf{E}^{B}_{z}\bigbkt{\exp\bkt{\beta^{2}\int_{0}^{\infty}R(\sqrt{2}B(s)) ds}} \bkt{ \mathbf{E}^{B}_{z}\bigbkt{\exp\bkt{\beta^{2}\int_{0}^{1}R(\sqrt{2}B(s)) ds}} -1} + o(1).
\end{align*}
Furthermore, for each $z \in \mathbb{R}^{d}$, a Riemann sum approximation yields
\begin{align*}
    \lim_{T\to\infty} \sum_{k=0}^{\infty}\frac{1}{T}G_{1+k/T}\bkt{\frac{x_{j}-x_{j'}}{\sqrt{2}} - \frac{z}{\sqrt{T}}} = \int_{0}^{\infty} ds G_{1+s}\bkt{\frac{x_{j}-x_{j'}}{\sqrt{2}}}.
\end{align*}
Therefore, applying dominated convergence theorem gives
\begin{align} \label{fluctuation_identity_C_T}
    &C_{T}(j,j') 
    = \int_{0}^{\infty} ds G_{1+s}\bkt{\frac{x_{j}-x_{j'}}{\sqrt{2}}} \nonumber\\
    &\times \int_{\mathbb{R}^{d}} dz\mathbf{E}^{B}_{z}\bigbkt{\exp\bkt{\beta^{2}\int_{0}^{\infty}R(\sqrt{2}B(s)) ds}} \bkt{ \mathbf{E}^{B}_{z}\bigbkt{\exp\bkt{\beta^{2}\int_{0}^{1}R(\sqrt{2}B(s)) ds}} -1} + o(1).
\end{align}
In particular, the two Brownian expectations on the right-hand side of \eqref{fluctuation_identity_C_T} can be merged by using 
\begin{align} \label{fluctuation_Brownian_identity}
    \mathbf{E}^{B}_{z}\bigbkt{\exp\bkt{\beta^{2}\int_{0}^{1}&R(\sqrt{2}B(s)) ds}} -1 \nonumber\\
    &= \int_{\mathbb{R}^{d}} dw \beta^{2} R(\sqrt{2}w) \mathbf{E}^{B}_{w}\bigbkt{\exp\bkt{\beta^{2}\int_{0}^{1}R(\sqrt{2}B(s)) ds}; B(1) = z}.
\end{align}
Specifically, plugging (\ref{fluctuation_Brownian_identity}) into (\ref{fluctuation_identity_C_T}) and 
integrating out $z$ imply
\begin{align*}
    C_{T}(j,j') = \int_{0}^{\infty} ds G_{1+s}\bkt{\frac{x_{j}-x_{j'}}{\sqrt{2}}} \frac{1}{2^{d/2}} \gamma(\beta)^{2} +o(1)= \text{(R.H.S) of (\ref{final_coefficient})},
\end{align*}
where $\gamma(\beta)^{2}$ is given in (\ref{gamma_and_R}). The proof is complete.
\end{proof}
Thus, by combining Lemma \ref{lamma:expression:conditional_variance_1} and Lemma \ref{lemma:final_coefficient}, 
$(V^{\Vec{\lambda}}_{T,\infty})^{2}$ can be approximated in $L^{1}$ as
\begin{align} \label{definition:widetidle_V}
    (V^{\Vec{\lambda}}_{T,\infty})^{2} = &(\widetilde{V}^{\Vec{\lambda}}_{T,\infty})^{2} + o(1), \quad (\widetilde{V}_{T,\infty})^{2} := \sum_{1\leq j,j'\leq M } \Vec{\lambda}(j) \Vec{\lambda}(j') C_{\infty}(j,j') \mathcal{Z}_{\tau_{T}}^{\beta}(\sqrt{T}x_{j};\xi)\mathcal{Z}_{\tau_{T}}^{\beta}(\sqrt{T}x_{j'};\xi),
\end{align}
where $C_{\infty}(j,j')$ is given by the right-hand side of \eqref{final_coefficient}. Combining \eqref{definition:widetidle_V} with the following property allows us to conclude Lemma \ref{lemma:not_converge_in_probability}.
\begin{lemma} \label{lemma_not_convergence}
Assume that $d\geq 3$, $\beta < \beta_{L^{2}}$, and $M\geq 2$. Then $(\widetilde{V}^{\Vec{\lambda}}_{T,\infty})^{2}$ given in \eqref{definition:widetidle_V}
does not converge in probability whenever $\Vec{\lambda} \neq \Vec{0}$.
\end{lemma}
\begin{proof}
Suppose that $(\widetilde{V}^{\Vec{\lambda}}_{T,\infty})^{2}$ converges in probability to some random variable $(\widetilde{V}^{\Vec{\lambda}}_{\infty,\infty})^{2}$. Consequently, applying (\ref{pointwise_convergence_formula_proof_section_goal_1}) with $t_{j} = 1$ yields the probability distribution of $(\widetilde{V}^{\Vec{\lambda}}_{\infty,\infty})^{2}$:
\begin{align*}
    (\widetilde{V}^{\Vec{\lambda}}_{\infty,\infty})^{2} \overset{d}{=} \sum_{1\leq j,j'\leq M } \Vec{\lambda}(j) \Vec{\lambda}(j') C_{\infty}(j,j') \mathbf{Z}^{\beta}(x_{j},1)\mathbf{Z}^{\beta}(x_{j'},1).
\end{align*}
Hence, since $\Vec{\lambda} \neq \Vec{0}$, $(\widetilde{V}^{\Vec{\lambda}}_{\infty,\infty})^{2}$ is a non-degenerate random variable, since
$\mathcal{Z}_{\infty}^{\beta}(0;\xi)$ is a  non-degenerate random variable, which was proved in \cite[Corollary 2.5]{weakandstrong}.

\begin{figure} [!htb]
\centering
\begin{tikzpicture} [scale=0.5]

\node (a1) at (11,4.5) [anchor=west]{\textcolor{teal}{$B(\sqrt{T_{b}}x_{j},\tau_{T_{b}}+r)\times (0,\tau_{T_{b}})$}};

\node (b1) at (7,1.5) [anchor=west]{\textcolor{blue}{$B(\sqrt{T_{a}}x_{j'},\tau_{T_{a}}+r)\times (0,\tau_{T_{a}})$}};

\node (c1) at (-1,3) [anchor=east]{$|\sqrt{T_{b}}x_{j}-\sqrt{T_{a}}x_{j'}|=$};

\node (d1) at (11,0) [anchor=north]{$\tau_{T_{b}}$};
\filldraw[black] (11,0) circle (1.5pt);

\node (e1) at (7,0) [anchor=north]{$\tau_{T_{a}}$};
\filldraw[black] (7,0) circle (1.5pt);

\filldraw[lightgray] (0,3.5) rectangle (11,6); 
\filldraw[lightgray] (0,0.5) rectangle (7,2.5);

\filldraw[teal] (0,4) rectangle (11,5.5); 
\filldraw[blue] (0,1) rectangle (7,2);

\draw[->] (0,0) -- (12,0) node[anchor=west]{$\mathbb{R}_{+}$};

\draw[->] (0,0) -- (0,7) node[anchor=south]{$\mathbb{R}^{d}$};

\draw[<->] (-0.5,1.5) -- (-0.5,4.5);

\end{tikzpicture}
\caption{ \justifying For a fix $T_{a}$, since $\tau_{T} = o(\sqrt{T})$, we can choose $T_{b}\gg T_{a}$ such that the following holds. For a fixed $1\leq j\leq M$, the above green and blue regions represent the white noise used by  $\mathcal{Z}_{\tau_{T_{b}};A_{T_{b}}}^{\beta}(\sqrt{T_{b}}x_{j};\xi)$ and $\mathcal{Z}_{\tau_{T_{a}};A_{T_{a}}}^{\beta}(\sqrt{T_{a}}x_{j'};\xi)$, respectively. The top and bottom gray regions cover all the white noise occupied by
$\sbkt{\mathcal{Z}_{\tau_{T_{b}};A_{T_{b}}}^{\beta}(\sqrt{T_{b}}x_{j};\xi)}_{1\leq j\leq M}$ and 
$\sbkt{\mathcal{Z}_{\tau_{T_{a}};A_{T_{a}}}^{\beta}(\sqrt{T_{a}}x_{j'};\xi)}_{1\leq j'\leq M}$, respectively.
} 
\label{figure:observation_3}
\end{figure}

On the other hand, by using Lemma \ref{lemma_ restricted_partition_functions}, we have
\begin{align*}
    (\widetilde{V}^{\Vec{\lambda}}_{T,\infty})^{2} - &(\overline{V}^{\Vec{\lambda}}_{T,\infty})^{2} \overset{\mathbb{P}}{\to} 0, \\
    &(\overline{V}^{\Vec{\lambda}}_{T,\infty})^{2}:=\sum_{1\leq j,j'\leq M } \Vec{\lambda}(j) \Vec{\lambda}(j') C_{\infty}(j,j') \mathcal{Z}_{\tau_{T};A_{T}}^{\beta}(\sqrt{T}x_{j};\xi)\mathcal{Z}_{\tau_{T};A_{T}}^{\beta}(\sqrt{T}x_{j'};\xi),
\end{align*}
where $\mathcal{Z}_{\tau_{T};A_{T}}^{\beta}(\sqrt{T}x_{j};\xi)$ is defined in (\ref{path_restriction_def}). In particular, according to the noise used by $\mathcal{Z}_{\tau_{T};A_{T}}^{\beta}(\sqrt{T}x_{j};\xi)$, described in (\ref{meausurability}) with $\delta_{j} = 0$,  we know that $\sbkt{\mathcal{Z}_{\tau_{T_{a}};A_{T_{a}}}^{\beta}(\sqrt{T_{a}}x_{j};\xi)}_{1\leq j\leq M}$ and $\sbkt{\mathcal{Z}_{\tau_{T_{b}};A_{T_{b}}}^{\beta}(\sqrt{T_{b}}x_{j};\xi)}_{1\leq j\leq M}$ are independent when $T_{b}$ is much larger than $T_{a}$, due to the setup $\tau_{T} = o(\sqrt{T})$ in Lemma \ref{lamma:expression:conditional_variance_1}; see Figure \ref{figure:observation_3} for a precise clarification. Combining these observations, we can construct an increasing sequence $(T_{k})_{k\geq 1}$ such that $\sbkt{(\overline{V}^{\Vec{\lambda}}_{T_{k},\infty})^{2}}_{k\geq 1}$ are independent random variables and converges in probability to the same limit $(\widetilde{V}^{\Vec{\lambda}}_{\infty,\infty})^{2}$. However, this would imply that $(\widetilde{V}^{\Vec{\lambda}}_{\infty,\infty})^{2}$ is almost surely a constant, which contradicts to the fact that $(\widetilde{V}^{\Vec{\lambda}}_{\infty,\infty})^{2}$ is non-degenerate.
\end{proof}

\section*{Acknowledgements}
The author is deeply grateful to Professor Yu-Ting Chen for bringing this problem to his attention and for many insightful discussions, and to Professor Clément Cosco for valuable comments. The author also thanks Professor Jhih-Huang Li for helpful discussions, and the National Center for Theoretical Sciences for providing this opportunity.

\printbibliography

\end{document}